\documentclass{article}
\usepackage[english]{babel}
\usepackage[margin=3.3cm]{geometry}
\usepackage{url}
\usepackage{titlesec}
\usepackage{ulem}
\titlespacing\section{0pt}{0pt plus 0pt minus 2pt}{0pt plus 0pt minus 0pt}
\titlespacing\subsection{0pt}{0pt plus 0pt minus 2pt}{0pt plus 0pt minus 0pt}
\titlespacing\subsubsection{0pt}{0pt plus 2pt minus 0pt}{0pt plus 0pt minus 0pt}
\usepackage{appendix,etoolbox}
\usepackage{authblk} 
\usepackage{float}
\usepackage{amsmath,amsthm,bbm,amssymb, amsfonts}   
\usepackage{thmtools}
\usepackage{indentfirst} 
\setlength{\parindent}{0em}
\setlength{\parskip}{1em}
\usepackage{csquotes}
\usepackage{xcolor}
\usepackage{multirow}
\usepackage{comment}
\usepackage{enumerate}
\usepackage{cancel}
\usepackage{stmaryrd}
\usepackage[titles]{tocloft}
\usepackage{setspace}
\usepackage{enumitem}
\usepackage{ifthen}
\usepackage{hyperref}
\usepackage{cite}
\usepackage{enumerate}
\newtheorem{theorem}{Theorem}[section]
\newtheorem{Prop}[theorem]{Proposition}

\newtheorem{lemma}[theorem]{Lemma}
\newtheorem*{lemma*}{Lemma}
\newtheorem{Lemma}[theorem]{Lemma}
\newtheorem{definition}[theorem]{Definition}

\newtheorem{assumption}[theorem]{Assumption}

\newtheorem*{assumption*}{Assumption}
\newtheorem*{assumptions*}{Assumptions}
\newtheorem*{definition*}{Definition}

\newcommand{\N}{\mathbb{N}}

\newcommand{\R}{\mathbb{R}}

\newcommand{\E}{\mathbb{E}}

\newcommand{\mtcX}{\mathcal{X}}

\newcommand{\mtcB}{\mathcal{B}}

\newcommand{\mtcU}{\mathcal{U}}
\newcommand{\mtcF}{\mathcal{F}}

\def\cmb#1{\marginpar{\raggedright\tiny{\textcolor{red}{Bertrand} : \textcolor{purple}{#1}}}}
\setlength{\marginparwidth}{2cm}

\def\cmv#1{\marginpar{\raggedright\tiny{\textcolor{red}{Vincent} : \textcolor{blue}{#1}}}}
\setlength{\marginparwidth}{2cm}

\def\cmt#1{\marginpar{\raggedright\tiny{\textcolor{red}{Tresnia} : \textcolor{magenta}{#1}}}}
\setlength{\marginparwidth}{2cm}

\begin{document}

\title{Strong law of large numbers and $L\log L$ condition for supercritical  branching processes}
\author{Vincent Bansaye\footnote{CMAP, INRIA, École polytechnique, Institut Polytechnique de Paris, 91120 Palaiseau, France}, \, Tresnia Berah\footnote{Department of Mathematics - Statistics Section, Imperial College, London, SW7 2AZ, United-Kingdom},  \, Bertrand Cloez \footnote{MISTEA, Université de Montpellier, INRAE, Institut Agro, Montpellier, France}}
\date{\today}

\maketitle

\begin{abstract}
We consider branching processes for structured populations: each individual is characterised by a type or trait which belongs to a general measurable state space. We focus on the supercritical recurrent case, where the population may survive and grow and the trait distribution converges to a probability measure. The branching process is then expected to be driven by the positive triplet of first eigenvalue problem of the first moment semigroup. Under the assumption of convergence of the renormalized semigroup in weighted total variation norm, we prove strong convergence of the renormalized empirical measure and non-degeneracy of  the limit of the martingale. Convergence is obtained under an $L\log L $ condition which provides a  Kesten-Stigum result in infinite dimension and relaxes the uniform convergence  of the first moment semigroup  in the work of Asmussen and Hering in 1976. The techniques of proof combine families of martingales and contraction properties and  the truncation procedure of Asmussen and Hering. These results  unify part of the literature and  capture new situations, as illustrated by   absorbed branching diffusion, the house of cards  model and some growth-fragmentation processes.
\end{abstract}

\noindent\textit{\small{Keywords: branching processes, martingales, weighted total variation norm, contraction of semi-groups, Kesten-Stigum condition, Lyapunov functions}}


\section{Introduction and main results}\label{sec:intro-results}
For a supercritical Galton-Watson process $Z$ with mean number of offspring $m>1$, the necessary and sufficient condition for the non-degeneracy of the limit $W$ of the martingale $W_n=Z_n/m^n$ is the famous $L\log L$ 
moment condition on the reproduction law $L$. The limit of the martingale is then finite and positive on the survival event \cite{kesten1966limit,lyons1995conceptual}.
The generalisation of the asymptotic behaviour to  the multitype case, with finite type set  $\mathcal X$, is also  known  from the  work of Kesten and Stigum \cite{kesten1966limit, kurtz1997conceptual}.

More precisely, when $\mathcal X$ is finite and the mean matrix $S=(S_{x,y})_{x,y\in \mathcal X}$ of the reproduction law is primitive (i.e.~irreducible and aperiodic), the Perron Frobenius theorem can be invoked. It  ensures that there exists a unique triplet $(\lambda, h,\gamma)$
where $h$ is a  positive function on $\mathcal X$, $\gamma$ is a probability  on $\mathcal X$  and  $\lambda \in (0,\infty)$
such that

$$Sh=\lambda h, \quad \gamma S= \lambda \gamma,  \quad\gamma(h)=\sum_{x\in \mathcal X} h(x)\gamma_x=1.$$ 
Moreover
$$ \, S^n_{x,y} \sim_{n\rightarrow \infty} \lambda^n \,  h(x) \, \gamma_y,$$ for any $x,y\in \mathcal X$.
The underlying convergence is exponential and uniform on $\mathcal X$ since this latter is finite. 
In the multitype setting, the branching process $Z_n=(Z_n^i : i \in \mathcal X)$ counts the number of individuals of each type $i$ in generation $n$. 
Equivalently, $Z$ can be represented by its empirical measure and at an individual level.
Denoting by $\mathbb G_n$  the individuals of generation $n$, the branching process can be defined by  
$$Z_n=\sum_{u\in \mathbb G_n} \delta_{Z(u)},$$
where $Z(u)$ is the type of individual $u$. We refer to Section \ref{sec:construction} for details. 
Thus, for any $i\in \mtcX$, $Z_n^i=Z_n(\{i\})$, and for $f$ non-negative function on $\mathcal X$,

$$ Z_n(f)=\sum_{u\in \mathbb G_n} f(Z(u))=\sum_{i\in \mathcal X} f(i) Z_n^i.$$
In  this  paper, we adopt  the semigroup and linear operator framework, which will be relevant in particular when $\mathcal X$ will be infinite. We consider the first moment semigroup of the multitype branching process defined for $x\in \mathcal X$ and $f$ non-negative function on $\mathcal X$ by 
$$S_nf(x)=\E_{\delta_x}(Z_n(f))=\E\left(\sum_{i\in \mathcal X} f(i) Z_n^i \, \vert \,Z_0=\delta_x\right).$$ We observe that $S_{x,y}=S_1 \mathbf{1}_{y}(x)=\E(Z_1^y \, \vert \, Z_0=\delta_x).$
Uniform convergence as per Perron Frobenius result can  be written as 
\begin{align}\label{UCsg}
\sup_{x\in \mathcal X, \vert f\vert \leq h} \bigg\vert\frac{S_nf(x)}{ \lambda^nh(x)\gamma (f)}-1\bigg\vert
\stackrel{n\rightarrow \infty}{\longrightarrow} 0.
\end{align}
Several versions  of the Kesten-Stigum theorem have been obtained in infinite type spaces, starting from works in the countable space under  some $L^2$ moment conditions \cite{Moy1967} and uniform bounds on the mean matrix \cite{harris1963theory}. Up to our knowledge,  the single  statement extending the finite dimensional case  to general  (measurable) state space $\mathcal X$ was obtained by \cite{asmussen1976strong}. More precisely, starting from one single individual with type $x\in \mathcal X$, they show that there exists a non-negative random variable $W$ such that 
$$\E_{\delta_x}(W)=h(x); \qquad \lim_{n\rightarrow \infty} \frac{Z_n(f)}{\lambda^n}=\gamma(f)\, W  \quad \mathbb P_{\delta_x} \, \quad \text{ a.s.}$$
Their result is proved assuming uniform convergence as per \eqref{UCsg} and the following moment condition  
\begin{align}
\label{LlogLh}
\E_{\gamma}(Z_1(h)\log^{\star} Z_1(h))<\infty,
\end{align}
where the continuous function $x\mapsto \log^{\star}x$ on $[0,\infty)$ is defined as
\begin{align*}
    \log^{\star}x =\begin{cases}
       x/e, \quad 0\leq x \leq e\\
        \log x, \quad x>e.
    \end{cases}
\end{align*}
On one hand, the previous $L\log L$ condition  \eqref{LlogLh} is optimum. But on the other hand, the uniform convergence \eqref{UCsg} may not be easily satisfied. And even when satisfied, it may actually be difficult to prove. We mention here recent works by \cite{horton2020stochastic, gonzalez2022asymptotic} and references therein,  with applications to neutron transport. 
Actually, Condition \eqref{UCsg} is reminiscent of the finite dimensional case. In particular, it  forces the second eigenfunction to be dominated by the first one.
This uniform convergence is not satisfied in general for branching processes with non-bounded type space, or with bounded space but whose behaviour on {the boundary} is degenerate. For instance, if we consider neutral models whereby 
the mean number of offspring does not depend on the type $x\in \mathcal X$, the harmonic function is $h=1,$
and Condition \eqref{UCsg} in this case would imply that the Markov chain following the typical type  along the spine comes down from infinity \cite{bansaye2011limit}. This, in turn, means that the Markov chain comes back to compact sets very fast when it starts from large values and therefore excludes behaviours like random walk with negative drift or subcritical branching process. Furthermore, the uniformity of convergence with respect to $\gamma(f)$  in \cite{asmussen1976strong}--\eqref{UCsg}  excludes interesting cases where the limiting distribution $\gamma$ admits  a density but the branching process has no diffusive components.
More generally, the study respectively of branching Brownian motion with absorption, of branching diffusion and of growth fragmentation, have all motivated the relaxation of the uniform convergence of the renormalized first moment, see e.g.~\cite{englander2009law,louidor2020strong, englander2010strong,  bertoin2020strong, tomavsevic2022ergodic, horton2020strong, bansaye2023growth,cloez2017limit,zbMATH07199305,chen2017law}. These aforementioned works obtain strong laws of large numbers for certain classes of branching processes. 
They require in general $L^p$ moment condition with $p>1$, but also a certain knowledge of spectral elements and quantitative estimates of the first moment semigroup.\\ 
 
Our aim in this paper is to present a general Kesten-Stigum theorem which help to unify the literature. We also want to cover new cases and models in continuous-time  and space. In practice, we   relax uniformity 
in \eqref{UCsg} whilst keeping minimal moment conditions and proving strong convergence of the empirical measure. 
The proof relies on  a family of martingales originating from martingale increments, together
with contraction properties of the first moment semigroup for weighted total variation norm. It also involves 
martingale decomposition into  an $L^1$ and an $L^2$ part coming from
 the subtle truncation procedure  of \cite{asmussen1976strong}. \\
Let us explain and motivate more the  contraction which plays a key role in our estimates and   leads to our assumption on the ergodic profile of the first moment semigroup. 
The existence of a stationary regime for the type distribution is directly linked to the ergodicity of the Markov chain of a typical individual and the existence of a Lyapunov function $V$ for the associated Markov kernel. We refer to  \cite{bansaye2011limit} for the neutral case, where it is simpler, and more broadly to \cite {Nummelin, cloez2017limit}. More generally, different  approaches have allowed to quantify the convergence of non conservative semigroups,  and in particular to obtain geometric  convergence in (possibly weighted) total variation norm  \cite{kontoyiannis2003spectral, kontoyiannis2012geometric, del2004feynman, del2002stability, del2023stability, bansaye2022non,velleret2023exponential, champagnat2023general, champagnat2016exponential}.
These works provide sufficient (and sometimes necessary) conditions   on the semigroup $(S_n)_n$ so that
 for any $x\in \mathcal X$ and any $n\in \mathbb N$,
\begin{align}
\label{ergodncc}
\sup_{\vert f \vert \leq V} \bigg\vert\frac{S_nf(x)}{ \lambda^n}-h(x)\gamma(f)\bigg\vert \leq C V(x) \eta ^n,
\end{align}
where $\eta \in (0,1)$ and $V\geq h$ is a Lyapunov function which quantifies the impact of the initial type in the speed of convergence and $C$ is a constant. The condition \eqref{UCsg} of \cite{asmussen1976strong}  amounts to requiring $V=h$. Among the new situations covered by our results, the case where $V=1$ and $h$ vanishes at the boundary  is interesting, see Section \ref{sec:Bdiffusion} for an example. The case of a non-bounded domain $\mathcal X,$ where $V$ grows to infinity faster than $h$ provides another relevant class of examples, see for instance \cite{bansaye2022non, tomavsevic2022ergodic, bansaye2023growth} and our last application in Section \ref{GFMod}. 
 We also want to allow sub-geometric convergence of the renormalized semigroup, as in \cite{asmussen1976strong}. We are motivated in particular by polynomial speed and refer to \cite{canizo2023harris, cloez2024fast} and  to Section \ref{sec:HouseOfCards} for details.

Let us now turn to a more formal presentation of the  main result. We will detail the general construction of our branching process and its semigroup in the next section. In this work, we consider a measurable space $\mathcal X$ with an underlying topological space equipped with its Borel $\sigma$-field $\mtcB_{\mtcX}$. As explained above, our main assumption concerns the first moment semigroup $(S_n)_n$ associated to the branching process.
\begin{assumption}
\label{ass:cvSt}
There exists a measurable function $V^\star : \mathcal X\rightarrow (0,\infty)$  and a triplet $(\lambda,\gamma,h)$
such that $\lambda >1$, $\gamma$ is a probability on $\mathcal X$, $h : \mathcal X\rightarrow (0,\infty)$ is measurable and  
$$ \gamma S_1 =\lambda \gamma, \quad S_1 h=\lambda h, \quad  \gamma(h)=1, \quad \sup_{\mathcal X} h/ V^\star<\infty, \quad \gamma(V^\star)<\infty.$$
Besides, there exists a sequence of non-negative real numbers $(a_n)_{n\in \N}$  such that
for any $n\geq 0$ and $x\in \mathcal X$,
\begin{align} \label{ergodnc}
\sup_{\vert f \vert \leq V^\star} \bigg\vert\frac{S_nf(x)}{ \lambda^n}-h(x)\gamma(f)\bigg\vert \leq V^\star (x) a_n, \qquad \sum_{k\geq 1} \frac{a_k}{k}<\infty.
\end{align}
\end{assumption}
We observe here that \cite{asmussen1976strong} only needs to assume that $a_n$ tends to $0,$ whereas we need slightly more.  On the other hand, we relax the uniform convergence by allowing $V^\star$ to be large compared to $h$.

\begin{theorem}\label{th:main-discret}
Under {Assumption \ref{ass:cvSt}}, we further suppose that there exists a measurable function $V: \mathcal X \to (0,\infty)$ such that  $\sup_{\mathcal{X}}h/V<\infty$, $V\leq V^\star$ and for any $k \geq 0$,
 \begin{align}\label{lloglass}
\sup_{x\in\mathcal{X}}\frac{\E_{\delta_x}(Z_k(V)\log^\star Z_k(V))}{V^\star (x)}<\infty.   \end{align}
Then, for any  $x\in \mathcal X$, 
\begin{align*}
 \lim_{n\rightarrow \infty}    \frac{Z_n(h)}{\lambda^n}= \, W, \quad  \mathbb P_{\delta_x} \text{ a.s. and in } \ L^1, \quad \E_{\delta_x}(W)=h(x).
\end{align*} Besides, for any $f$ such that $\sup_{\mathcal{X}}f/V<\infty$,
\begin{align*}
\lim_{n\rightarrow \infty}    \frac{Z_n(f)}{\lambda^n}=\gamma(f) \, W,\quad  \mathbb P_{\delta_x} \text{ a.s. and in } \ L^1.
\end{align*}
\end{theorem}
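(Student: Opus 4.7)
The plan is to split the proof into two stages. First, I would establish the $L^1$ and a.s.\ convergence of the fundamental martingale $W_n = Z_n(h)/\lambda^n$ to a non-degenerate limit $W$ with $\E_{\delta_x}(W) = h(x)$. Second, I would propagate this convergence to $Z_n(f)/\lambda^n$ for general $f$ with $\|f\|_V := \sup_{\mathcal X}|f|/V<\infty$, by exploiting the $V^\star$-weighted ergodicity \eqref{ergodnc} together with a doubly indexed family of martingales built from the semigroup itself.

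\textbf{Step 1: convergence of $W_n$.} Since $S_1 h = \lambda h$, the sequence $(W_n)$ is a non-negative $(\mathcal F_n)$-martingale, hence converges a.s.\ to some $W \geq 0$ with $\E_{\delta_x}(W) \leq h(x)$. The task is to exclude degeneracy. I would adapt the Asmussen--Hering truncation \cite{asmussen1976strong} to the weighted setting: at each generation $m$, split each parent's one-step contribution according to whether $Z_1(V)$ stays below a threshold $c_m$ growing like $\lambda^m$ or exceeds it. The hypothesis \eqref{lloglass} is precisely what makes the cumulative $L^1$-mass of the ``big'' parts summable in $m$. On the truncated ``small'' part, the one-step variance is controlled by $c_m\,\E[Z_1(V)]$; summing against the $V^\star$-weighted estimate \eqref{ergodnc} applied to $f=h$ yields an $L^2$-bounded martingale. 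Combining the two controls gives uniform integrability of $(W_n)$, hence $W_n \to W$ in $L^1$ and $\E_{\delta_x}(W) = h(x)$.

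\textbf{Step 2: from $h$ to general $f$.} By linearity it suffices to prove $Z_n(g)/\lambda^n \to 0$ a.s.\ and in $L^1$ for the centered function $g := f - \gamma(f)\,h$, which satisfies $\gamma(g)=0$ and $\|g\|_{V^\star} < \infty$. For each horizon $N$, consider the martingale indexed by $0 \leq m \leq N$,
$$M_m^{(N)} := \frac{1}{\lambda^{N}} \sum_{u \in \mathbb G_m} S_{N-m} g(Z(u)),$$
which interpolates between $M_0^{(N)} = S_N g(x)/\lambda^N$ and $M_N^{(N)} = Z_N(g)/\lambda^N$. By \eqref{ergodnc} and $\gamma(g)=0$, the initial value $M_0^{(N)}$ vanishes as $N \to \infty$. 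The conditional variance of each increment is the branching variance at generation $m-1$ of an offspring sum weighted by $S_{N-m} g$, whose magnitude \eqref{ergodnc} bounds by $\|g\|_{V^\star}\, V^\star\, a_{N-m}$. Repeating the truncation from Step 1 on this variance and using the uniform-in-$m$ bound $\E_{\delta_x}[Z_m(V^\star)]/\lambda^m = S_m V^\star(x)/\lambda^m \leq h(x)\gamma(V^\star) + V^\star(x)\,a_m$ derived from \eqref{ergodnc}, the quadratic variation of $(M^{(N)})$ is controlled by a weighted sum of the $a_k$'s whose convergence is exactly guaranteed by the hypothesis $\sum_k a_k/k < \infty$. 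This yields $L^1$ decay of $Z_N(g)/\lambda^N$; an interpolation between consecutive generations, together with a Borel--Cantelli step, upgrades to a.s.\ convergence along the full sequence. Combined with Step 1, this gives $Z_n(f)/\lambda^n \to \gamma(f) W$ a.s.\ and in $L^1$.

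\textbf{Main obstacle.} The central difficulty is that the $L\log L$ moment \eqref{lloglass} is strictly weaker than any $L^p$ bound with $p > 1$, so a naive second-moment martingale argument is unavailable. The Asmussen--Hering truncation is the key technical device: it splits each martingale into an $L^2$-manageable truncated piece and an $L^1$-tail killed by the $L\log L$ hypothesis. A second subtlety, absent from \cite{asmussen1976strong}, is that the ergodicity \eqref{ergodnc} is measured in a norm weighted by $V^\star$, which may be much larger than $h$; propagating this weighted control through the martingale variance estimates and through the iterations $S_m V^\star/\lambda^m$ crucially uses the Lyapunov-type integrability $\gamma(V^\star) < \infty$ built into Assumption \ref{ass:cvSt}.
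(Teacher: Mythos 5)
Your overall architecture matches the paper's: your horizon-$N$ martingale $M_m^{(N)}=\E(Z_N(g)/\lambda^N\mid\mathcal F_m)$ is exactly the decomposition of Lemma \ref{dec} (its increments are the $\Delta_m^{(1)}(T_1^{N-m}f)$ of the paper, since $T_1^{k}f=\lambda^{-k}S_kf-\gamma(f)h=\lambda^{-k}S_kg$), and your Step 1 is the $A/B$ splitting of Proposition \ref{desmartingales}. The $L^1$ conclusions can indeed be reached along these lines. There is, however, a genuine gap in your argument for \emph{almost sure} convergence of $Z_n(f)/\lambda^n$. You propose to extract an $L^1$ (or $L^2$) rate for $Z_N(g)/\lambda^N$ from the quadratic variation of $M^{(N)}$ and conclude by Borel--Cantelli. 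Under the sole hypothesis $\sum_k a_k/k<\infty$, the sequence $(a_k)$ may decay as slowly as $(\log k)^{-2}$; the resulting bound on $\E\lvert Z_N(g)/\lambda^N\rvert$ --- a convolution of the $a_{N-m}$'s against the generation weights --- is then of order $a_{\lfloor N/2\rfloor}^2\log N+o(1)$, which tends to $0$ but is \emph{not} summable in $N$. Borel--Cantelli therefore does not apply to the full sequence, and the ``interpolation between consecutive generations'' you invoke is not available for signed test functions: there is no monotonicity to interpolate with, and the array $M^{(N)}$ changes entirely with $N$, so no single martingale convergence theorem covers it.

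The paper's resolution is precisely the idea your sketch is missing: first pass to the sampled process $X_n^{(r)}=Z_{nr}/\lambda^{nr}$ for an $r$ with $a_r<1$, then reorganize the triangular array $\sum_{i=1}^{n}\Delta_i^{(r)}(T_r^{n-i}f)$ as $M_n^{(r)}(f)+\sum_{i=1}^{n-1}M_{n-i}^{(r)}(T_r^{i-1}(T_r-\textsc{Id})f)$: each summand is now a genuine martingale in $n$ with a \emph{fixed} test function, convergent a.s.\ by the $A/B$ splitting, while the tail $i\geq N$ is controlled uniformly in $n$ by a geometric series in $a_r^{N}$ (Lemma \ref{Restepetit}); the $r$ residue classes are then glued by identifying their limits through the martingale $Z_n(h)/\lambda^n$. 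A secondary imprecision: in Step 1 your one-step variance bound $c_m\,\E[Z_1(V)]$ is too crude --- summed over generations it yields a divergent series $\sum_m O(1)$. The $L^2$ part actually requires the sharper truncated-moment estimate $\E(Z^2\mathbf 1_{Z\le c})\le C\,(c/\log^{\star}c)\,\E(Z\log^{\star}Z)$, which is where \eqref{lloglass} and $\sum_k a_k/k<\infty$ really enter, together with the identity $\sum_i\lambda^{-i}\mathbf 1_{Z\le\lambda^{i}}\le C/Z$ for the $h\,\gamma(\cdot)$ contribution.
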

As explained at the beginning of this section, and as the reader will see in the last sections, several contraction results and spectral techniques  allow to prove the existence of a Lyapunov function $V^\star$ such that Assumption \eqref{ass:cvSt} is satisfied. In practice, the candidate function $V$ that allows checking for the moment condition \eqref{lloglass} will be such that $V \log V=V^{\star}$ at infinity.
In effect, the proof only needs that condition \eqref{lloglass} be satisfied for $k$ large enough. Moreover, it is in general  sufficient to verify \eqref{lloglass} for $k=1$  by propagating the estimates along generations, see forthcoming Proposition~\ref{Propos-propag}.  This moment condition  is equivalent to the classical $L \log L $ criterion of Kesten-Stigum for Galton-Watson processes with a single type or a finite number of types. More precisely, condition \eqref{lloglass} is equivalent to {Asmussen-Hering} condition \eqref{LlogLh} as soon as it is bounded. 
Besides, given that $\gamma(V^{\star})<\infty$,  condition \eqref{lloglass} implies
\begin{equation}
\label{othermoment}
\E_{\gamma}(Z_1(V)\log^{\star} Z_1(V))<\infty,
\end{equation}
which in turn, since $h$ is dominated by $V$, implies the {Asmussen-Hering}
condition \eqref{LlogLh}.
With regard to applications,  we believe that conditions \eqref{lloglass} and \eqref{othermoment} are very close. More precisely, to the authors' knowledge and as applications may show, checking \eqref{othermoment} in practice often amounts to proving \eqref{lloglass} via drift conditions. The fact that condition \eqref{LlogLh} involves only the eigenelements $h$ and $\gamma$ is closely linked to the uniformity, with respect to $h$ and $\gamma$, in \eqref{UCsg}.  Let us also mention that spine techniques initiated in \cite{lyons1995conceptual, kurtz1997conceptual} allow us to see {non-degeneracy of the limit martingale} through boundedness of branching with immigration. With this approach, \cite{athreya2000change} has proved non degeneracy of  {the limit of the martingale} $W$ under uniform $L\log L$ condition for general state spaces. 
{On the non-degeneracy of the martingale under the $L \log L$ condition, we can cite the very sharp results of \cite{liu2011log}, for local branching processes whose law is density-based, and \cite{2025arXiv250305575A} who obtained very recently sharp results on countable spaces.}
Moreover, with this approach, \cite{zbMATH07199305} have obtained a strong law of large numbers, beyond our (positive) recurrent framework,  in the $L^2$ case with partial relaxation of the  uniform convergence of the first moment semigroup. \\
The approach here, involving decomposition of the empirical measure with family of martingales and contractions, is complementary to the {aforementioned ones}. In the finite dimensional case and the uniform case, condition \eqref{UCsg} of \cite{asmussen1976strong} amounts, in our setting, to $h=V=V^{\star}$, whereby then, all functions spaces involved in the proofs coincide.  The case $h\ll  V=V^{\star}=1$  will be useful for applications in Section \ref{sec:Bdiffusion} and \ref{sec:HouseOfCards}, while 
 the case  $V$ unbounded and $h\leq V \ll V^{\star} \sim V \log^{\star}(V)$ will   be relevant 
in Section \ref{GFMod} and application to growth fragmentation.  Indeed, roughly speaking,
$Z_1(V)\log^{\star} Z_1(V)$ is expected to be 
of order $V(x)\log^{\star} V(x)$ when the traits of the offspring $Z_1$ are comparable to the parent trait $x$ with positive probability. Note that we do not particularly assume that $1 \leq V$ and {hence we can capture} examples where we may not be able to control the mean number of individuals. \\

The proposed applications relate to our original motivations for this work and are in continuous-time, namely branching diffusions with absorption, house of cards model and growth fragmentation. Therefore, we  provide a  continuous-time framework which allows to construct general branching processes, check non explosion as well as condition \eqref{lloglass}, while the semigroup behavior \eqref{ergodnc} will be derived from recent works.\\
The main ingredients of our proofs could be extended to other branching structures and in particular adapted to the study of superprocesses, see e.g. \cite{englander2009law}. We note that such extensions may require replacing individual based decompositions with analytic martingale measure framework, used in \cite{liu2013strong}, together with resolvent estimates. This could lead to interesting developments.  

The paper is organized as follows. In Section~\ref{se:discret}, we focus on the discrete-time framework. We first give a general construction of the branching process in Section \ref{sec:construction} and then introduce the contraction operator as well as the key decomposition of the renormalized empirical measure  $(Z_n(f)/\lambda^n)_n$ in Section \ref{sec:contraction-and-LlogL}. In Section~\ref{sec:fam-of-mg}, we study the families of martingales involved in this decomposition.  We then prove the main result in discrete-time and provide some additional estimates in Section \ref{sec:main-discret}, which are useful in particular for continuous-time.  The last section, Section \ref{sec:continuous-time-and-examples}, is devoted to the continuous framework. From the discrete framework, we derive in Section \ref{sec:gen-cv-result} the non-degenerescence of the {the limit of the martingale} and the a.s.~and $L^1$ convergence of the renormalized empirical measure. Finally in Sections \ref{sec:Bdiffusion} and \ref{GFMod}, we present two applications.

\section{General study in discrete-time}
\label{se:discret}
\subsection{Construction and  definitions}\label{sec:construction}

We proceed now with the construction of discrete-time Markov branching processes with measurable state space $\mathcal X$. We also give a few details on the branching  property and the associated first moment semigroup. 

Let $\mathcal X$ be a measurable space endowed with the $\sigma$-field  $\mathcal B_{\mathcal X}$ and $(\Omega, \mathcal F, \mathbb P)$ a probability space satisfying the usual conditions. We use Ulam-Harris-Neveu notations, {where $\varnothing$ denotes the root individual} and let 
\begin{align*}
    \mathcal{U}=\bigcup_{n\geq 0} \N^n=\{\varnothing\} \cup \N \cup \N^2 \ldots
\end{align*}
be the set that allows to label individuals whilst retaining the genealogical information. Each individual of trait $x \in \mathcal{X}$  independently gives birth to a random number $L^x$ of progeny, whose law depends on $x$. The trait distribution of the offspring is also trait-dependent. More precisely, trait transmission to children is modelled by a family of random vectors $(\Theta_x)_{x\in \mathcal{X}},$  whereby for any $x\in \mtcX,$ $ \Theta_x=(X_i^x: i=1,\ldots, L^x)$ is a r.v.~valued in $\mathbb{X}=\cup_{k\geq 0} \mathcal X^k$, endowed with the $\sigma$-field $\cup_{k\geq 0} (\mathcal B_{\mathcal X})^k$. The r.v.~$X_i^x$ represents the trait of the $i$th child of an individual whose trait is given by $x$.
Let us now choose independently for each label 
$u\in \mathcal U$ a family of random vectors $(\Theta_x(u))_{x\in \mtcX}$ with the common  distribution $(\Theta_x)_{x\in \mtcX}$.

We denote $\Theta_x(u)=(X_i^x(u): i=1,\ldots, L^x(u))$ where $X_i^x(u)$
is the trait of 
the $i$th offspring of individual $u$ with 
trait  $x$ and $L^x(u)$ the number of offspring.  Finally, we  assume that  $\Theta, \Theta(u) : \mathcal X\times \Omega \rightarrow \mathbb X $ 
are measurable, for $u\in \mathcal U$, where  $ \mathcal X\times \Omega $ is endowed with the natural product $\sigma$-field. 
We can now define recursively the branching process, by constructing simultaneously the set of living individuals and their trait. We start with one single individual in generation $0$.
\begin{definition}
    The branching process started with one single individual  of trait $x_0\in \mathcal X$ and  with  reproduction r.v.~$(\Theta_x(u))_{x\in \mtcX, u \in \mtcU}$ is the family of r.v.~$(Z(u))_{ u\in \mathbb G_n, n\in \mathbb N}$ 
     defined recursively by 
    \begin{align*}
        \mathbb{G}_0=\{ \varnothing\}, \quad Z(\varnothing)=x_0,
    \end{align*}
    and for all $n\geq 0,$
    \begin{align*}
        \mathbb{G}_{n+1}=\{ uk: u \in \mathbb{G}_n, k \in \llbracket 1, L^{Z(u)}(u) \rrbracket\}, \quad \quad  Z(uk)=X_k^{Z(u)}(u),
    \end{align*}
    for all $u \in \mathbb{G}_n$ and $k \in \llbracket 1, L^{Z(u)}(u) \rrbracket.$
\end{definition}
In this definition, $\mathbb G_n$ is the set of individuals in generation $n$ and $Z(u)$ the trait of individual $u$.
This definition implies that the law of $(Z(u))_{ u\in \mathbb G_n, n\in \mathbb N}$ is determined by the law of $(\Theta_x)_{x\in \mtcX}$ and by the initial trait value $x_0.$ Observe that this definition easily extends, by the branching property, to any type of initial conditions, including several individuals and any random traits.

The process
$$Z_n=\sum_{u\in \mathbb G_n} \delta_{Z(u)}$$
so constructed verifies both a Markov property and a branching property, which are directly inherited from the independence of the r.v.~$(\Theta_x(u))_{x\in \mtcX}$ for $u\in \mtcU$. More precisely, for any $n\geq 0,$ we let $\mtcF_n$ be the natural filtration to $Z_n.$ Conditional on $\mtcF_n$, the processes
\begin{equation}
\label{rooted}
Z^{(u)}=(Z^{(u)}_p)_{p\in \N}, \quad \text{where } \quad Z^{(u)}_p= \sum_{ v : uv\in \mathbb G_{n+p}} \delta_{Z(uv)}
\end{equation}
are independent and $Z^{(u)}$ is distributed as the original process $(Z_n)_{n\in \mathbb N}$ started from  $\delta_{Z(u)}$.

For all $x\in \mathcal{X}$ and $f$ measurable and non-negative function on $\mathcal{X}$,  we define
$$Sf(x)=\E_{\delta_x}\left(\sum_{u\in \mathbb G_1}f(Z(u))\right)=\E_{\delta_x}(Z_1(f)),$$
which is non-negative but can be infinite. Similarly, for $n\geq 0,$ we define the non-negative and possibly infinite quantity
$$S_nf(x)=\E_{\delta_x}\left(\sum_{u\in \mathbb G_n}f(Z(u))\right)=\E_{\delta_x}(Z_n(f)).$$
We have that $S_n= S_1 \circ \cdots \circ S_1 =S^n$, $S_1=S$ and $S_0=\textsc{Id}$. We assume that there exists a positive function $V^\star$ such that there exists $C>0$ which satisfies
\begin{equation}
\label{eq:SVleqCV}
SV^\star(x) \leq C V^\star(x),
\end{equation}
for any $x\in \mathcal X$. We observe that  Assumption~\ref{ass:cvSt} is stronger than \eqref{eq:SVleqCV} by  taking $n=1$ and relying on the facts that $h$ is dominated by $V^\star$ and $\gamma(V^\star) < \infty$. Inequality \eqref{eq:SVleqCV} guarantees that the space $\mathcal B_+(V^\star)$ of measurable non-negative functions $f$ such that
$$\sup_{x\in \mtcX} \frac{f(x)}{V^\star(x)} <\infty,  $$
is {stable under} $S$. More precisely, if
$f \in \mathcal  B_+(V^\star)$, then $Sf\in \mathcal  B_+(V^\star)$ and by iteration $S_nf\in \mathcal  B_+(V^\star)$ for any $n\in \N.$

We can next consider the space
$\mathcal B(V^\star)$ of measurable  functions $f$ such that
$$\sup_{x\in \mtcX} \frac{\vert f(x) \vert}{V^\star(x)} <\infty,$$
and extend the definition of $S$ to this space by using the positive and negative parts of $f \in \mtcB(V^{\star})$:
$$Sf=Sf_+-Sf_-.$$
The space $\mathcal{B}(V^\star)$ is also {stable under} $S$ and  $S$ enjoys the semigroup property on $\mathcal{B}(V^\star)$:
\begin{align*}
S_{n+1}f=S_n(Sf)=S(S_nf),
\end{align*}
for any $f\in \mathcal{B}(V^\star)$.
\subsection{Contraction and $L\log L$ moment}\label{sec:contraction-and-LlogL}

For an integer $r$, we introduce the operator $T_r$ through
$$T_rf=\lambda^{-r} S_rf -\gamma(f)h,$$
and its $n$-th iteration 
$$T^{n}_r=T_r\circ \cdots  \circ T_r,$$
with the convention $T_r^0=\textsc{Id},$ for the identity operator. The space $\mathcal{B}(V^\star)$ of measurable functions $f:\mtcX\to \R$ which are dominated by $V^\star,$ endowed with the norm $\lVert \cdot \rVert_{\mtcB(V^\star)}$ defined by
\begin{align*}
   \forall f \in \mtcB(V^\star), \quad  \lVert f\rVert_{\mtcB(V^\star)}:=\sup_{x\in \mtcX}\frac{\vert f(x)\rvert}{V^\star(x)},
\end{align*}
is a Banach space. Assumption \ref{ass:cvSt} on the semigroup writes
$$ \lVert T_r f\rVert_{ \mtcB(V^\star)}\leq a_r \lVert f\rVert_{\mtcB(V^\star)},$$
and implies  for $n\geq 0$,
\begin{equation}
\label{cacontracte}
 \lVert T^{n}_r f\rVert_{\mtcB(V^\star)}\leq a_r^n
\lVert f\rVert_{\mtcB(V^\star)}.
\end{equation}
When $a_r$ is smaller than $1$, we obtain a contraction. In practice,  we will use the following exponential decrease 
$$\vert T^{n}_r f(x) \vert \leq a_r^n V^\star(x), $$
that holds for any $\vert f\vert \leq V^\star$, $x\in \mathcal X$ and $n\geq 0$. 
 Moreover $\gamma$ is a left eigenmeasure for $S$ and for any $r\geq 1,$ we will also use  $ \gamma T_r=0$.
We study the renormalized empirical measure 
 $$X_n^{(r)}:=\lambda^{-nr}Z_{nr}.$$ Note that $(X_n^{(1)}(h))_n=(Z_{n}(h)/\lambda^{n})_n$ is the classical non-negative martingale associated to the harmonic function $h$. Our target is the process $X_n^{(1)}(f)$ whose asymptotic behavior we wish to describe.
Recall that $(\mathcal F_n)_n$ is the natural filtration of the branching process $(Z_n)_n$ and  denote $\mathcal F_n^{(r)}=\mathcal F_{nr}$ for $n\geq 0,r\geq 1$.


The  proofs involve  the martingale increments, defined for any $f\in \mathcal{B}(V^\star)$  by
$$\Delta_n^{(r)}(f):=X_{n}^{(r)}(f)-\E(X_{n}^{(r)}(f) \, \vert \, \mathcal F_{n-1}^{(r)})=M_n^{(r)}(f)-M_{n-1}^{(r)}(f),$$ 
and the associated martingale starting from $0$ satisfying for $n\geq 1$ \begin{align}
\label{defmart}
M_n^{(r)}(f)=\sum_{i=1}^n \Delta_i^{(r)}(f).
\end{align}
Taking $f=h,$  $X_n^{(r)}(h)=X_0^{(r)}(h)+M_n^{(r)}(h)$ and we recall that it is a martingale. 
Our approach relies  on  the following  decomposition of $X_n^{(r)}(f)$ for more general functions $f$ in $\mathcal B(V^{\star})$. It allows to exploit the previous family of martingales together with contraction properties of $T$.
\begin{Lemma} \label{dec} Under Assumption \ref{ass:cvSt}, for any $f\in \mtcB(V^\star)$, $r\geq 1$ and {$n\geq 1$},
\begin{align*}
    X_n^{(r)}(f)=X_0^{(r)}(T^n_r f)+\gamma(f) \, X^{(r)}_{n-1}(h)\, + \, R_n^{(r)}(f),
\end{align*}
where
\begin{align*}
R_n^{(r)}(f)&=\sum_{i=1}^{n} \Delta_{i}^{(r)}(T^{n-i}_{r}f)=
M_n^{(r)}(f)+\sum_{i=1}^{n-1} M_{n-i}^{(r)}(T^{i-1}_{r}(T_r-\textsc{Id})f).
\end{align*}
 \end{Lemma}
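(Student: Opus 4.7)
The plan is to establish the one-step recursion first, then iterate it using the fact that $\gamma T_r = 0$, and finally reorganize the martingale-increment terms by a telescoping/Abel trick.

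\textbf{Step 1: one-step recursion.} By the branching property and the fact that the $nr$-th generation descends from the $(n-1)r$-th after $r$ additional generations, I would compute
\begin{equation*}
\E(Z_{nr}(f)\mid \mathcal F_{(n-1)r}) = Z_{(n-1)r}(S_r f),
\end{equation*}
so that $\E(X_n^{(r)}(f)\mid \mathcal F_{n-1}^{(r)}) = X_{n-1}^{(r)}(\lambda^{-r}S_r f)$. Rewriting $\lambda^{-r}S_r f = T_r f + \gamma(f) h$ and using the definition of $\Delta_n^{(r)}$ yields
\begin{equation*}
X_n^{(r)}(f) = X_{n-1}^{(r)}(T_r f) + \gamma(f)\, X_{n-1}^{(r)}(h) + \Delta_n^{(r)}(f).
\end{equation*}

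\textbf{Step 2: iterate and use $\gamma T_r = 0$.} Applying Step 1 to $T_r f$ in place of $f$ and observing that $\gamma(T_r f)=0$ kills the harmonic term, so
\begin{equation*}
X_{n-1}^{(r)}(T_r f) = X_{n-2}^{(r)}(T_r^2 f) + \Delta_{n-1}^{(r)}(T_r f).
\end{equation*}
A straightforward induction on the $T_r^k f$'s, iterated $n$ times, then produces
\begin{equation*}
X_n^{(r)}(f) = X_0^{(r)}(T_r^n f) + \gamma(f)\, X_{n-1}^{(r)}(h) + \sum_{i=1}^n \Delta_i^{(r)}(T_r^{n-i} f),
\end{equation*}
which is the first expression for $R_n^{(r)}(f)$.

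\textbf{Step 3: telescoping to expose the martingales.} For each $k\in\{1,\dots,n\}$ I would expand
\begin{equation*}
T_r^{n-k} f = f + \sum_{l=0}^{n-k-1} T_r^l (T_r - \textsc{Id}) f,
\end{equation*}
which is just the telescoping identity for the iterates of $T_r$. Linearity of $\Delta_k^{(r)}$ in its argument gives
\begin{equation*}
R_n^{(r)}(f) = \sum_{k=1}^n \Delta_k^{(r)}(f) + \sum_{k=1}^n \sum_{l=0}^{n-k-1} \Delta_k^{(r)}\bigl(T_r^l (T_r-\textsc{Id}) f\bigr).
\end{equation*}
Swapping the order of summation (for fixed $l\in\{0,\dots,n-2\}$, $k$ ranges over $\{1,\dots,n-l-1\}$) and using the definition $M_m^{(r)}(g)=\sum_{k=1}^m \Delta_k^{(r)}(g)$, followed by the change of variable $i=l+1$, yields the claimed second form of $R_n^{(r)}(f)$.

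This proof is essentially bookkeeping; the only slightly delicate point is making sure that the Abel-type swap in Step 3 is carried out with the correct upper limits (in particular that the outer sum runs up to $n-1$ and that the inner martingale has length $n-i$), and that linearity of $T_r$ and $\Delta_k^{(r)}$ on $\mathcal B(V^\star)$ legitimately commutes with the infinite-sum manipulation, which is fine because the sum over $l$ is finite for each fixed $k$.
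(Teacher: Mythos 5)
Your proof is correct and follows essentially the same route as the paper: the same one-step recursion via the branching property, the same iteration killed by $\gamma T_r=0$, and an equivalent Abel/telescoping rearrangement (you telescope the operator iterates $T_r^{n-k}f$ before applying $\Delta_k^{(r)}$, while the paper Abel-sums the differences $M_i^{(r)}-M_{i-1}^{(r)}$; the index bookkeeping in both cases yields the identical second expression for $R_n^{(r)}(f)$). No gaps.
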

We remark that in the forthcoming proof, all that is actually needed from Assumption \ref{ass:cvSt}, is that $\gamma T_r=0$.
 \begin{proof} We use for $f\in \mathcal{B}(V^\star)$,
 \begin{align*}
    X_{n}^{(r)}(f)&=\E(X_{n}^{(r)}(f) \, \vert \, \mathcal F_{n-1}^{(r)})+ \Delta_n^{(r)}(f).
\end{align*} 
Moreover, by decomposing the population of generation $(n+1)r$ in terms of the ancestors belonging to generation $nr$ and writing $v\succ u$ when $v$ is a descendant of $u$,
\begin{align*}
\E(X_{n+1}^{(r)}(f) \, \vert \, \mathcal F_{n}^{(r)})&=\lambda^{-nr}\sum_{u\in \mathbb G_{nr}} \E\left(\lambda^{-r}  \sum_{v \in \mathbb G_{(n+1)r} : v\succ u} f(Z(v))  \vert \, \mathcal F_n^{(r)}\right)\\
&=\lambda^{-nr}\sum_{u\in \mathbb G_{nr}} \lambda^{-r}S_rf(Z(u))\\
&=X^{(r)}_n(\lambda^{-r}S_rf)=X^{(r)}_n(T_rf)+\gamma(f) \, X^{(r)}_n(h).
\end{align*}
By iterating this identity, we obtain for $n\geq 1$,
\begin{align*}
    X_n^{(r)}(f)=X_0^{(r)}(T_r^nf)+\sum_{i=1}^n \gamma(T_r^{i-1}f) \, X^{(r)}_{n-i}(h)\, + \, R_n^{(r)}(f),
\end{align*}
where 
\begin{equation}
\label{martincrM}
 R_n^{(r)}(f)=
    \sum_{i=1}^{n} \Delta_{i}^{(r)}(T^{n-i}_rf)= \sum_{i=1}^{n} M_{i}^{(r)}(T^{n-i}_rf)-M_{i-1}^{(r)}(T^{n-i}_rf).
    \end{equation}
The result follows by recalling that $\gamma T_r=0$, which implies $\gamma(T_r^{i-1}f)=0$ for $i\geq 2$, and rearranging the last sum by linearity of $f\mapsto M_i^{(r)}(f)$.
 \end{proof}
Before studying the martingales involved in this decomposition, 
we give a useful condition to ensure that the moment condition \eqref{lloglass} on $Z_n(V)\log^{\star} Z_n(V)$   propagates for $n\geq 1$. It will also guarantee that it grows  
like the first eigenvalue $\lambda$, at the  logarithmic scale. These conditions will be satisfied in each of our applications. 
\begin{Prop}\label{Propos-propag}
Let Assumption \ref{ass:cvSt} hold and $V:\mathcal{X} \to (0,\infty)$ be a measurable function such that $V\leq V^\star$ and $V\log(V)\in \mathcal B(V^{\star})$. We
consider for  $n\geq 0$, 
$$I_n= \sup_{x \in \mathcal X} \frac{\E_{\delta_x}(Z_n(V)\log^{\star} Z_n(V))}{V^\star(x)}.$$
(i)  If   $I_1<\infty$ and $ SV \in \mathcal{B}(V)$, then $I_n<\infty$ for any $n  \geq 1$.\\
(ii) If $I_1<\infty$ and there exists $C\geq 0$ such that for any $n\in \mathbb N$, $S_nV\leq C \lambda^n V$,
then $I_n<\infty$ for any $n \in \N^*$ and  
$$\lim_{n\rightarrow \infty} \frac{\log I_n}{n}=\log \lambda.$$
\end{Prop}
\begin{proof}
The assertion (i) is a direct consequence of forthcoming Lemma~\ref{Lemma-propag-i} in Appendix, with $m=n,\, q=1, \, r=0$.

 We now turn to the asymptotic analysis of  $\log I_n/n$. A lower-bound of this latter quantity can be derived by applying the semigroup Condition \eqref{ergodnc} to $V.$ Indeed:
\begin{align*}
    \frac{ \E_{\delta_x} (Z_{n}(V))}{V^\star(x)} =\frac{S_nV(x)}{V^\star(x)}\geq \lambda^{n}\left(\gamma(V)\frac{h(x)}{V^\star(x)}-a_n\right). 
\end{align*}
Then, considering the event where $Z_{n}(V)>e$ and its complementary set, we get
\begin{align*}  \frac{\E_{\delta_x}\left(Z_{n}(V)\log^{\star} Z_{n}(V)\right)}{V^\star(x)}\geq \frac{ \E_{\delta_x} (Z_{n}(V))}{V^\star(x)} -\frac{e}{V^\star(x)},
\end{align*}
and
\begin{align*}
I_n\geq \sup_{x\in \mathcal X}  \left\{\lambda^{n}\left(\gamma(V)\frac{h(x)}{V^\star(x)}-a_n\right) -\frac{e}{V^\star(x)} \right\}.
\end{align*}
We fix now $x$ on the right hand side and   using that $\limsup_{n\rightarrow \infty} \log(a_n)/n=0$  since $\sum a_n/n<\infty$. It yields $$\liminf_{n\rightarrow \infty} \log (I_n)/n\geq \log(\lambda).$$
Let us prove the upperbound and conclude.
Using  Lemma~ \ref{Lemma-propag-ii}, there exists a constant $0<C<\infty$ such that for any $n\in \N^*$ where $n=mq+r,$ with $m,q\in \N^*$ and $ 0\leq r\leq m-1$, 
\begin{align}\label{eq:I_n-leq-I_r-I_q}
      I_n \leq n \lambda^n C^m (I_r+I_q+1).
\end{align}
Observe first that this inequality applied to $m=n$, $q=1$ and $r=0$ ensures that
 \begin{align*}
    \limsup_{n\to \infty} \frac{\log I_n }{n}\leq \log \lambda +\log C:=C'.
\end{align*}
In the second step, we use again inequality \eqref{eq:I_n-leq-I_r-I_q} and choose  $m:=\lfloor \sqrt{n} \rfloor$ and $r, q \leq \lfloor \sqrt{n} \rfloor$ such that $n=\lfloor \sqrt{n} \rfloor q+r.$  Inequality \eqref{eq:I_n-leq-I_r-I_q} becomes
\begin{align*}
      I_n \leq n\lambda^n C^{\lfloor \sqrt{n} \rfloor} (I_r+I_q+1).
\end{align*}

We combine the last two bounds and the fact that $C<\infty$, $\gamma(V^\star)<\infty$
and get
\begin{align*}
    \limsup_{n \to \infty}   \frac{\log I_n}{n} \leq \log \lambda +\limsup_{n \to \infty} \frac{\log(C)\sqrt{n}+\log(n)}{n} +2C' \limsup_{n \to \infty} \frac{\sqrt{n}}{n}= \log \lambda,
\end{align*}
which ends the proof.
\end{proof}

\subsection{Families of martingales}\label{sec:fam-of-mg}
We now need to determine the long time behaviour of the martingales $M^{(r)}(f)$ and to finely control them. This is achieved by exploiting a martingale decomposition, respectively, into an $L^1$ and an $L^2$ contributions. Classically, the $L^1$ part gathers large jumps. The decomposition  here is inspired by the subtle truncation argument put forward by \cite{asmussen1976strong}. More precisely, the martingale increment writes for $f\in \mathcal{B}(V^\star)$,
\begin{align} 
\Delta_{n+1}^{(r)}(f)&=X_{n+1}^{(r)}(f)-\E(X_{n+1}^{(r)}(f) \, \vert \, \mathcal F_n^{(r)}) \nonumber\\
&=\lambda^{-nr}\sum_{u\in \mathbb G_{nr}}
\left\{\lambda^{-r}Z^{(u)}_r(f)-\E_{\delta_{Z(u)}}(X^{(r)}(f))\right\} \label{decDelta}
=A_{n+1}^{(r)}(f)+B_{n+1}^{(r)}(f),
 \end{align} 
where $Z^{(u)}$ is the branching process rooted in $u$, which has been defined in \eqref{rooted},
and the two contributions $A$ and $B$ are given for any $n\in \N$ by
 \begin{align} 
A_{n+1}^{(r)}(f)&=\lambda^{-nr}\sum_{u\in \mathbb G_{nr}} \left\{ \lambda^{-r}Z^{(u)}_r(f)\mathbf{1}_{Z^{(u)}_r(V)\leq \lambda^{nr}} -\E_{\delta_{Z(u)}}(X^{(r)}(f)\mathbf{1}_{Z_r(V)\leq \lambda^{nr}}) \right\},\label{def:A}\\
B_{n+1}^{(r)}(f)&=\lambda^{-nr}\sum_{u\in \mathbb G_{nr}} \left\{\lambda^{-r}Z^{(u)}_r(f)\mathbf{1}_{Z^{(u)}_r(V)>\lambda^{nr}} -\E_{\delta_{Z(u)}}(X^{(r)}(f)\mathbf{1}_{Z_r(V)>\lambda^{nr}})\right\}.\label{def:B}
 \end{align}
The convergence of the renormalized empirical measure will rely on the following convergences.
\begin{Prop}\label{desmartingales} 
Under the Assumptions of Theorem \ref{th:main-discret},    for any $f \in \mathcal B(V)$ and $r\geq 1$ and  $n \in \N$, the following decomposition holds
$$M^{(r)}_n(f)=\sum_{i=1}^n A_{i}^{(r)}(f)+\sum_{i=1}^n B_{i}^{(r)}(f),$$
where  $(\sum_{i=1}^n A_{i}^{(r)}(f))_n$ is a martingale, bounded in $L^2$  and 
$(\sum_{i=1}^n B_{i}^{(r)}(f))_n$ is a uniformly integrable martingale.\\
As a consequence, when $n\rightarrow \infty$, $(M^{(r)}_n(f))_n$  converges a.s. and in $L^1$ to $M^{(r)}_{\infty}(f)$  and $(X_n(h))_n$ converges a.s. and in $L^1$ to  $W\in [0,\infty)$ which satisfies $\E_{\delta_x}(W)=h(x)$.
  
\end{Prop}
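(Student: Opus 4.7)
The plan is to use the decomposition already built into \eqref{decDelta}--\eqref{def:B}, establishing $L^2$ control on the truncated piece $\sum_i A_i$ and $L^1$ control on the tail piece $\sum_i B_i$. That both are martingales follows immediately from the branching property \eqref{rooted}: conditionally on $\mathcal F_n^{(r)}$, each of $A_{n+1}^{(r)}(f)$ and $B_{n+1}^{(r)}(f)$ is a centered sum over $u\in \mathbb G_{nr}$ of independent r.v.~(each $u$-summand centered by construction). The identity in the statement is then just a rewriting of $M_n^{(r)}(f) = \sum_{i=1}^n \Delta_i^{(r)}(f)$.

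For the $L^2$ bound on $\sum_i A_i$, I would exploit orthogonality of martingale increments together with conditional independence. Since $f\in \mathcal B(V)$, $|Z_r^{(u)}(f)| \leq C Z_r^{(u)}(V)$, yielding
\begin{align*}
\E\bigl[\bigl(A_{n+1}^{(r)}(f)\bigr)^2\bigr] \leq C^2 \lambda^{-2(n+1)r}\, S_{nr} \varphi_n(x_0), \quad \varphi_n(x) = \E_{\delta_x}\bigl[Z_r(V)^2 \mathbf 1_{Z_r(V) \leq \lambda^{nr}}\bigr].
\end{align*}
Applying Assumption \ref{ass:cvSt} to $S_{nr}\varphi_n$ splits $\sum_n \E[(A_{n+1})^2]$ into a main term of order $\sum_n \lambda^{-(n+2)r} h(x_0)\gamma(\varphi_n)$ and a correction of order $\sum_n \lambda^{-(n+2)r} V^\star(x_0) a_{nr} \|\varphi_n\|_{\mathcal B(V^\star)}$. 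The main term, after a Fubini swap, becomes $\E_\gamma[Z_r(V)^2 \sum_{n:\lambda^{nr}\geq Z_r(V)} \lambda^{-nr}] \lesssim \E_\gamma[Z_r(V)]$, which is finite since $V\leq V^\star$ and $\gamma(V^\star)<\infty$. For the correction, the sharp bound $y^2 \mathbf 1_{y\leq \lambda^{nr}} \leq y\log^\star(y) \cdot \lambda^{nr}/\log^\star(\lambda^{nr})$ combined with \eqref{lloglass} yields $\|\varphi_n\|_{\mathcal B(V^\star)} \lesssim \lambda^{nr}/n$, reducing the correction series to $\sum_n a_{nr}/n<\infty$ by Assumption \ref{ass:cvSt}.

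The analysis of $\sum_i B_i$ is parallel, with $\psi_n(x) = \E_{\delta_x}[Z_r(V)\mathbf 1_{Z_r(V) > \lambda^{nr}}]$ in the role of $\varphi_n$: one obtains $\E[|B_{n+1}^{(r)}(f)|] \lesssim \lambda^{-(n+1)r} S_{nr}\psi_n(x_0)$. The Fubini swap on the main part turns it into $\E_\gamma[Z_r(V) \cdot \#\{n : Z_r(V) > \lambda^{nr}\}] \lesssim \E_\gamma[Z_r(V)\log^\star Z_r(V)]$, finite under \eqref{lloglass} because $\gamma(V^\star)<\infty$; this is the step where the $L\log L$ hypothesis is genuinely used. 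For the correction, $y\mathbf 1_{y>\lambda^{nr}} \leq y\log^\star(y)/\log^\star(\lambda^{nr})$ gives $\|\psi_n\|_{\mathcal B(V^\star)} \lesssim 1/n$, so once again the correction reduces to $\sum_n a_{nr}/n < \infty$. Together these show $\sum_n \E[|B_{n+1}|] < \infty$, hence absolute $L^1$ convergence and uniform integrability of the partial sums.

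Combining, $\sum_i A_i$ converges a.s.~and in $L^2$, $\sum_i B_i$ converges a.s.~and in $L^1$, so $M_n^{(r)}(f)$ converges a.s.~and in $L^1$. Taking $r=1$ and $f=h\in\mathcal B(V)$, $X_n(h) = h(x) + M_n^{(1)}(h)$ is a non-negative martingale whose $L^1$ limit $W$ therefore satisfies $\E_{\delta_x}(W)=h(x)$. The main obstacle is the delicate matching between the semigroup error $a_{nr}$ and the $L\log L$ moment: only the $\log^\star$-based bounds---producing a $1/n$ factor via $\log^\star(\lambda^{nr}) \asymp n$---make the correction series $\sum_n a_{nr}/n$ summable against the Abel-type condition $\sum_k a_k/k < \infty$ of Assumption \ref{ass:cvSt}. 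A cruder $L^\infty$ bound like $\|\varphi_n\|_{\mathcal B(V^\star)} \lesssim \lambda^{nr}$ would demand $\sum_n a_{nr} < \infty$, which need not hold in our setting.
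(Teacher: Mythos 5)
Your proposal is correct and follows essentially the same route as the paper: the same truncated decomposition $\Delta = A + B$, the same split of each series into a main term (handled via the eigenmeasure $\gamma$ and a Fubini/geometric-counting argument) and a correction term (handled via the $\log^\star$-based Markov inequalities that produce the $1/n$ factor matching $\sum_k a_k/k<\infty$), and the same conclusion via uniform integrability of the $B$-part and $L^2$-boundedness of the $A$-part. Your closing remark about why the cruder bound $\|\varphi_n\|_{\mathcal B(V^\star)}\lesssim \lambda^{nr}$ would not suffice correctly identifies the delicate point of the argument.
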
 

\begin{proof}
We first focus on the $L^1$ part involving $\sum\limits_{n\geq 0}B_n^{(r)}(f).$ We observe that for any $f\in \mathcal B(V)$,
 \begin{equation}
 \label{borne}
    \vert B_{n}^{(r)}(f) \vert  \leq  \, \lVert f\rVert_{\mtcB(V)} \,   B^{(r)+}_{n},  
 \end{equation}
  where 
  $$  B^{(r)+}_{n+1}:=\lambda^{-nr}\sum_{u\in \mathbb G_{nr}} \left\{\lambda^{-r}Z^{(u)}_r(V)\mathbf{1}_{Z^{(u)}_r(V)>\lambda^{nr}} +\E_{\delta_{Z(u)}}(X_r(V)\mathbf{1}_{Z_r(V)>\lambda^{nr}})\right\}.$$
Moreover
\begin{align*}
  \E_{\delta_x}\left(  
  B^{(r)+}_{n+1}   \right)&= 2   \lambda^{-(n+1)r}\E_{\delta_x}\left(  \sum_{u\in \mathbb G_{nr}}
  \phi^{(r)}_n(Z(u))    \right)
  =2\lambda^{-r}\lambda^{-nr}
S_{nr}\phi^{(r)}_n(x)\\
& \qquad \qquad \qquad \qquad \leq 2\lambda^{-r}\left(h(x)\gamma (\phi^{(r)}_n) + V^\star(x) a_{nr} \lVert \phi^{(r)}_n\rVert_{\mtcB(V^\star)}\right), 
\end{align*}
where for any $y\in \mathcal{X},$ 
\begin{equation}
\label{definature}
    \phi^{(r)}_n(y):=  \E_{\delta_y} 
(Z_r(V) \mathbf{1}_{Z_r(V)>\lambda^{nr}}).
\end{equation}
Next, as $h$ is dominated by $V^\star$ we obtain
\begin{align*}
& \sum_{n\geq 0} \E_{\delta_x}\left(  
  B^{(r)+}_{n+1}   \right) \\
&\quad \leq 2 \left(\lVert h \rVert_{\mtcB(V^\star)} +1\right) \lambda^{-r} V^\star(x) \left\{ \E_{\gamma}\left(\sum_{n\geq 0} Z_r(V) \mathbf{1}_{ Z_r(V)>\lambda^{nr}}\right)+
\sum_{n\geq 0} a_{nr} \lVert \phi^{(r)}_n\rVert_{\mtcB(V^\star)}\right\}.
\end{align*}
In the rest of this proof and subsequent proofs, we will denote $C$ a constant independent of $n, i,r \in \N$ but whose value can change from line to line. 
Recalling \eqref{definature}, Markov  inequality allows us to upper-bound this second expectation term:
$$
\lVert \phi^{(r)}_n\rVert_{\mtcB(V^\star)}\leq
\sup_{y \in \mathcal X}  \frac{1}{V^\star(y)} \E_{\delta_y}\left(Z_r(V)\frac{\log^\star Z_r(V)}{ \log^\star \lambda^{nr}}\right)\leq \frac{C}{nr \log \lambda}I_r,
$$
since $\log^{\star}\lambda^u\geq C u\log \lambda.$ 
To upper-bound the first expectation term, we write
$$N_r=\sup\{ n \in \mathbb N : Z_r(V) > \lambda^{nr}\}=\sup\{ n \in \mathbb N : \frac{\log^{\star}(Z_r(V))}{\log^\star \lambda^{n r}} > 1\},$$ 
which leads to
\begin{align*}
\sum_{n\geq 0} Z_r(V) \mathbf{1}_{ Z_r(V)>\lambda^{nr}}&\leq 
\sum_{n\geq 0} Z_r(V)\frac{\log^\star(Z_r(V))}{ \log^\star(\lambda^{N_rr})} \mathbf{1}_{ Z_r(V)>\lambda^{nr}}\\
&\leq 
Z_r(V) \frac{\log^\star(Z_r(V))}{ \log^\star(\lambda^{N_rr})} N_r
\leq  \frac{C}{r\log \lambda} Z_r(V)\log^\star(Z_r(V))
\end{align*}
and
$$\E_{\gamma}\left(\sum_{n\geq 0} Z_r(V) \mathbf{1}_{ Z_r(V)>\lambda^{nr}}\right)\leq   \frac{C}{r}  \gamma(V^\star) I_r.$$
Finally, gathering these estimates, we obtain
\begin{align*}
 \sum_{n\geq 0} \E_{\delta_x}\left(
  B^{(r)+}_{n+1}\right)  
  & \leq C V^\star(x) \frac{\lambda^{-r}}{r} I_r \Big(\gamma(V^\star)+\sum_{n\geq 0}\frac{a_{nr}}{n}\Big).
\end{align*}
These computations show that 
\begin{align}
\label{domBL1ps}
 \sum_{i=1}^{n}  
\vert  B_i^{(r)}(f)\vert \, \leq  \, \lVert f\rVert_{\mtcB(V)} \,\sum_{i=1}^{\infty}
    B^{(r)+}_i \qquad \text{ where} \qquad \sum_{i=1}^{\infty} 
    \E_{\delta_x}(B^{(r)+}_i)<\infty,
    \end{align}
which ensures that $(\sum_{i=1}^n B_{i}^{(r)}(f))_n$ is uniformly integrable.


Let us now turn towards the $L^2$ part of the decomposition of $M_n^{(r)}(f).$  We are  using the 
semi-martingale decomposition of the square of $\left(\sum_{i= 1}^n A_i^{(r)}(f)\right)_n$. More precisely,   we rely on  the quadratic variation and  use that
$$\left( \sum_{i= 1}^n A_i^{(r)}(f)\right)^2- \sum_{i=1}^n  \, \E\left( A_{i}^{(r)}(f)^2 \,  \vert \, \mathcal F^{(r)}_{i-1} \right) $$
is a martingale starting from $0$.
Besides, expanding the sum and using that the r.v. ~in  $A^{(r)}_{i+1}(f)$ are centred and independent conditionally on $\mathcal F^{(r)}_{i}$, we get:
 \begin{align}
 \label{idvq}
 \E\left( A^{(r)}_{i+1}(f)^2  \,  \vert \, \mathcal F^{(r)}_{i} \right)
=\lambda^{-ir} X_i^{(r)}(\psi^{(r,f)}_i),
 \end{align}
where for all $y\in \mathcal{X},$
$$\psi^{(r,f)}_i(y):=\E_{\delta_y} \left(\left(X_r(f)\mathbf{1}_{Z_r(V)\leq \lambda^{ir}} -\E_{\delta_y}(X_r(f)\mathbf{1}_{Z_r(V)\leq \lambda^{ir}}) \right)^2 \right).$$
Moreover 
$$\psi^{(r,f)}_i(y)\leq 2\E_{\delta_y} 
\left(X_r(f)^2 \mathbf{1}_{Z_r(V)\leq \lambda^{ir}} \right)\leq  \, \lVert f\rVert_{\mtcB(V)}^2 \,V^{(r)}_i(y),$$
with 
$$V^{(r)}_i(y)=2 \E_{\delta_y} 
\left(X_r(V)^2 \mathbf{1}_{Z_r(V)\leq \lambda^{ir}} \right).$$
We obtain
 \begin{align*}
\sum_{i=1}^n  \, \E\left( A_{i}^{(r)}(f)^2 \,  \vert \, \mathcal F^{(r)}_{i-1} \right)  & \leq \,  \lVert f\rVert_{\mtcB(V)}^2 \,\sum_{i= 0}^{\infty}  \lambda^{-ir} X_i^{(r)}(V^{(r)}_i).
 \end{align*}
To conclude, let us prove that the martingale is bounded in $L^2$ by computing the expectation of the right-hand side of the above inequality. 
  \begin{align*}
\E_{\delta_x}\left(\sum_{i=0}^{\infty}  \lambda^{-ir} X_i^{(r)}(V^{(r)}_i)\right)&=
\sum_{i=0}^{\infty}  \lambda^{-ir} \lambda^{-ir} S_{ir}V^{(r)}_i(x)\\
&\leq \sum_{i=0}^{\infty}  \lambda^{-ir}\left( h(x) \gamma(V^{(r)}_i)+V^\star(x)a_{ir} \lVert V^{(r)}_i\rVert_{\mtcB(V^\star)}\right).
  \end{align*} 
We follow similar arguments as with the $L^1$ part with the right-hand side above and observe that there exists a constant $C$ that does not depend on $r$ such that 
  $$\sum_{i=0}^\infty \lambda^{-ir} \mathbf{1}_{Z_r(V)\leq \lambda^{ir}} \leq \frac{C}{2Z_r(V)}.$$
  Then for any $y\in \mathcal X$, we write 
  \begin{align*}
\sum_{i= 0}^{\infty} \lambda^{-ir}V^{(r)}_i (y)&=2\E_{\delta_y} \left(X_r(V)^2 \sum_{i\geq 0} \lambda^{-ir} \mathbf{1}_{Z_r(V)\leq \lambda^{ir}} \right)\\
&\quad \qquad \leq C \lambda^{-r} \E_{\delta_y}( X_r(V))=C \lambda^{-2r}S_rV(y),
\end{align*}
which yields
\begin{align}
\label{abime}
    \sum_{i=0}^{\infty}  \lambda^{-ir} \gamma(V^{(r)}_i)\leq C\lambda^{-2r} \gamma (S_rV) {\leq} C\lambda^{-r} \gamma(V^\star),
\end{align}
where we used that $S_rV \leq \lambda^r V^\star(x).$
Observe as well for any $a,b>e$
$$a^2\mathbf{1}_{a\leq b}=a\log(a) \cdot \frac{a}{\log(a)}\mathbf{1}_{a/\log(a)\leq b/\log(b)}\leq a\log(a)\cdot\frac{b}{\log(b)},$$ since $u/\log(u)$ is increasing for $u>e$, so 
$$\E_{\delta_y}(Z_r(V)^2 \mathbf{1}_{e\leq Z_r(V)\leq \lambda^{ir}})\leq  \frac{\lambda^{ir}}{ir \log^\star \lambda} \, \E_{\delta_y}(Z_r(V)\log^\star(Z_r(V))).$$
Noting that $u^2=eu\log^{\star}(u)$ for $u<e$ and that there exists a constant $C$ depending only on $\lambda$ such that $\lambda^{ir} \geq Cir\log^{\star}\lambda$ for any $i,r\geq 1$, we get
$$\E_{\delta_y}(Z_r(V)^2 \mathbf{1}_{ Z_r(V)\leq \lambda^{ir}})\leq  \frac{C \lambda^{ir}}{ir \log^\star \lambda} \, \E_{\delta_y}(Z_r(V)\log^{\star}(Z_r(V))).$$
Adding that
$$\lVert V^{(r)}_i\rVert_{\mtcB(V^\star)}=\frac{2}{\lambda^{2r}}\sup_{y\in \mathcal X}  \frac{\E_{\delta_y} \left(Z_r(V)^2 \mathbf{1}_{Z_r(V)\leq \lambda^{ir}} \right)}{V^\star(y)}$$
yields 
  \begin{align}
  \sum_{i=0}^{\infty}  \lambda^{-ir} a_{ri} \lVert V^{(r)}_i\rVert_{\mtcB(V^\star)} \leq C \frac{\lambda^{-2r}}{\log\lambda} \, I_r \, \sum_{i=0}^{\infty}   \frac{a_{ri}}{ri}
  <\infty. \label{abim}
  \end{align}
 Gathering these estimates and recalling that $h$ is dominated by $V$
 by assumption, 
 we obtain
 \begin{align}
\E_{\delta_x}\left(\left(\sum_{i= 1}^n 
  A_i^{(r)}(f)\right)^2\right) & 
  =\E_{\delta_x}\left( \sum_{i=1}^n  \, \E\left( A_{i}^{(r)}(f)^2 \,  \vert \, \mathcal F^{(r)}_{i-1} \right)\right) \label{carreA}\\
 & \leq \lVert f\rVert_{\mtcB(V)}^2 \, \E_{\delta_x}\left(\sum_{i= 0 }^{\infty}  \lambda^{-ir} X_i^{(r)}(V^{(r)}_i)\right)\nonumber\\
  & \leq C \lVert f\rVert_{\mtcB(V)}^2 \,\lambda^{-r} V^\star(x)\left(\gamma(V^\star)+\lambda^{-r}I_r \, \sum_{i=0}^{\infty}   \frac{a_{ri}}{ri}\right)<\infty.\nonumber
 \end{align}
This shows the $L^2$ boundedness, and also provides a more quantitative estimate. The first part of the proposition has thus been proved.

We obtain directly the convergence of the martingale $(M^{(r)}(f))_n$, which is the sum of a uniformly integrable martingale and a martingale bounded in $L^2$. Indeed, 
a uniformly integrable martingale converges a.s.~and in $L^1$; see for instance \cite[Chapter 14, page134]{williams1991probability}.  Besides, a  martingale bounded in $L^2$ converges a.s.~and in $L^2$ (so in $L^1$ too). Recalling that
 $X_n(h)=h(x)+M_n^{(1)}(h)$ proves the last part.
 \end{proof}

Finally, besides these decompositions, we also need to control the series of martingales of Lemma \ref{dec}. The latter is achieved by exploiting the contraction $T_r$.
\begin{Lemma} \label{Restepetit} Under the Assumptions of Theorem \ref{th:main-discret}, for any $f \in \mathcal B(V)$ and any $r$ such that $a_r<1$, 
$$\sup_{n>N} \bigg\vert \sum_{i=N}^{n-1} M_{n-i}^{(r)}(T^{i-1}_r(T_r-\textsc{Id})f) \bigg\vert\stackrel{N\rightarrow \infty}{\longrightarrow} 0 \quad 
\text{and} \quad 
M_{\infty}^{(r)}(T_r^if)\stackrel{i\rightarrow \infty}{\longrightarrow} 0 $$
a.s. and in  $L^1.$
\end{Lemma}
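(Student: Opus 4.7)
The strategy is to exploit the contraction of $T_r$ on $\mathcal B(V^\star)$ to extract a geometric factor $a_r^{i-1}$ from each term, and then to control the per-term supremum $\sup_k |M_k^{(r)}(g_i)|$ via the $A$--$B$ martingale decomposition already developed in the proof of Proposition \ref{desmartingales}. Set $g_i := T_r^{i-1}(T_r - \mathrm{Id})f$. Since $f \in \mathcal B(V) \subseteq \mathcal B(V^\star)$ (using $V \leq V^\star$), and $\|T_r f\|_{\mathcal B(V^\star)} \leq a_r \|f\|_{\mathcal B(V^\star)}$ by \eqref{cacontracte}, the iterated contraction yields
\begin{align*}
\|g_i\|_{\mathcal B(V^\star)} \leq a_r^{i-1}(1+a_r)\|f\|_{\mathcal B(V)}.
\end{align*}
The triangle inequality together with monotone convergence then reduces the lemma to the $L^1$-summability
\begin{align*}
\sum_{i \geq 1} \E_{\delta_x}\!\left[\sup_{k \geq 1} \bigl|M_k^{(r)}(g_i)\bigr| \right] < \infty,
\end{align*}
since this bounds $\sup_{n > N}|\sum_{i=N}^{n-1} M_{n-i}^{(r)}(g_i)|$ from above by the tail $\sum_{i \geq N} \sup_k |M_k^{(r)}(g_i)|$. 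The $L^1$ convergence to $0$ then follows from dominated convergence, and almost-sure convergence follows because $L^1$-summability forces the entire series $\sum_i \sup_k |M_k^{(r)}(g_i)|$ to be almost-surely finite, so its tail vanishes a.s.

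For each fixed $i$, I would decompose $M_k^{(r)}(g_i) = \sum_{j=1}^{k} A_j^{(r)}(g_i) + \sum_{j=1}^{k} B_j^{(r)}(g_i)$ as in Proposition \ref{desmartingales}. For the square-integrable part, Doob's $L^2$ maximal inequality combined with the explicit bound \eqref{carreA} gives $\E_{\delta_x}[\sup_k (\sum_{j=1}^k A_j^{(r)}(g_i))^2] \leq 4 C_{r,x}^{(2)} \|g_i\|_{\mathcal B(V^\star)}^2$; for the uniformly integrable part, $\sup_k |\sum_{j=1}^k B_j^{(r)}(g_i)| \leq \sum_{j \geq 1} |B_j^{(r)}(g_i)|$, whose $L^1$-norm is at most $C_{r,x}^{(1)} \|g_i\|_{\mathcal B(V^\star)}$ by the summability argument in the proof of Prop \ref{desmartingales}. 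The constants $C_{r,x}^{(1)}, C_{r,x}^{(2)}$ depend on $r,x,V^\star$ and the moment $I_r$, but not on $i$. Combining with the geometric decay, $\E_{\delta_x}[\sup_k |M_k^{(r)}(g_i)|] \leq C_{r,x} a_r^{i-1}$, so the tail $\sum_{i \geq N} C_{r,x} a_r^{i-1} \leq C_{r,x} a_r^{N-1}/(1-a_r) \to 0$ as $N \to \infty$, using $a_r < 1$.

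The main obstacle is that the estimates in Proposition \ref{desmartingales} are formulated in terms of $\|\cdot\|_{\mathcal B(V)}$, whereas the contraction only produces decay in $\|\cdot\|_{\mathcal B(V^\star)}$, and $g_i$ need not belong to $\mathcal B(V)$ in general. To bridge this gap, one must revisit the truncation argument of Proposition \ref{desmartingales}, now applied with the pointwise bound $|g_i(z)| \leq C a_r^{i-1} V^\star(z)$ while keeping the truncation level on $Z_r(V)$. The key ingredients ensuring that the estimates still go through with $\|\cdot\|_{\mathcal B(V^\star)}$ in place of $\|\cdot\|_{\mathcal B(V)}$ are the finiteness of $\gamma(V^\star)$, the $L\log L$ moment condition \eqref{lloglass} on $V$, and the summability $\sum_k a_{kr}/k < \infty$ inherited from Assumption \ref{ass:cvSt}. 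This technical adaptation is the delicate heart of the argument; once verified, the geometric summation above concludes the proof.
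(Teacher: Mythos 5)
Your argument is sound in structure and reaches the conclusion by a genuinely different organization of the double sum than the paper's. The paper first performs an Abel-type rearrangement, rewriting $\sum_{i=N}^{n-1} M_{n-i}^{(r)}(T_r^{i-1}(T_r-\textsc{Id})f)$ as a single sum of martingale increments $\sum_{i=1}^{n-N}\Delta_i^{(r)}(Q_r^{n,i,N}f)$ with $Q_r^{n,i,N}=T_r^{n-i}-T_r^{N-1}$, and then controls the supremum over $n$ by summing the $A$/$B$ bounds over all $n>N$ (and, for the $L^2$ part, additionally over $N$) to get summability of the expected squared suprema. You instead keep the sum indexed by the test function $g_i=T_r^{i-1}(T_r-\textsc{Id})f$, dominate the supremum crudely by the tail $\sum_{i\ge N}\sup_k|M_k^{(r)}(g_i)|$, and control each term by Doob's $L^2$ maximal inequality (for the $A$ part) and absolute summability of $\sum_j|B_j^{(r)}(g_i)|$ (for the $B$ part), the geometric factor $a_r^{i-1}$ coming directly from $\|g_i\|_{\mathcal B(V^\star)}$. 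Your route is arguably more elementary and transparent: the decay is attached to $i$ once and for all, and the a.s. and $L^1$ statements both fall out of the single summability $\sum_i \E[\sup_k|M_k^{(r)}(g_i)|]<\infty$. What the paper's rearrangement buys is that the same object $\sum_i\Delta_i^{(r)}(Q_r^{n,i,N}f)$ is reused verbatim from Lemma \ref{dec}, so the conclusion plugs directly into the decomposition used in Proposition \ref{prop:final_res-discrete}; your version requires one extra (trivial) triangle-inequality step there but nothing more.

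The one incomplete step is the one you flag yourself: the estimates of Proposition \ref{desmartingales} — inequality \eqref{borne} for the $B$ part and the bound on $\psi_i^{(r,f)}$ for the $A$ part — are proved for test functions dominated by $V$, whereas $g_i$ for $i\ge 2$ is only dominated by $V^\star$ (since $T_r$ maps $\mathcal B(V)$ into $\mathcal B(V^\star)$ but not back into $\mathcal B(V)$). Replacing $\|\cdot\|_{\mathcal B(V)}$ by $\|\cdot\|_{\mathcal B(V^\star)}$ in those bounds is not automatic, because the truncation events are written in terms of $Z_r(V)$ while the test function now contributes $Z_r(V^\star)$, and the $L\log L$ hypothesis \eqref{lloglass} controls $Z_r(V)\log^\star Z_r(V)$ only; asserting that "the estimates still go through" is a claim, not a proof. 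You should be aware, however, that the paper's own proof makes exactly the same silent substitution (it applies \eqref{borne} and \eqref{idvq} to $Q_r^{n,i,N}f$ while measuring it in the $\mathcal B(V^\star)$ norm), so this is a shared delicate point rather than a defect of your route; but in a complete write-up you would need to either carry out the extension of the truncation estimates to $\mathcal B(V^\star)$ explicitly or restrict the intermediate statements so that only $\mathcal B(V)$ functions ever enter the $A$/$B$ machinery.
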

\begin{proof} First,  for $n>N$, recalling the link between $M$ and $\Delta$ in \eqref{defmart}
\begin{align}
\sum_{i=N}^{n-1} M_{n-i}^{(r)}(T^{i-1}_r(T_r-\textsc{Id})f)&=\sum_{i=1}^{n-N}M_i^{(r)}(T_r^{n-i}f-T_r^{n-(i+1)}f)\nonumber\\
& = M_1^{(r)}(T_r^{(n-1)}f)+\sum_{i=2}^{n-N}(M_i^{(r)}-M_{i-1}^{(r)})(T_r^{n-i}f)-M_{n-N}^{(r)}(T_r^{N-1}f)\nonumber\\
& =\sum_{i=1}^{n-N}\Delta_i^{(r)}(Q_r^{n,i,N}f),\label{preuve-bord?}
\end{align}
whereby, to ease notations, we introduce the operator 
$$Q_r^{n,i,N}:= T_r^{n-i}-T_r^{N-1}.$$ 
Using the contraction $T_r$, 
it satisfies  for any $1\leq i\leq n-N$,
 $$\lVert Q_r^{n,i,N} f\rVert_{\mtcB(V^\star)} \leq 
 2a_r^{n-i}\lVert f\rVert_{\mtcB(V^\star)},$$
and we are  exploiting  decomposition \eqref{decDelta}
\begin{equation}
\label{decDelta2}
\sum_{i=1}^{n-N} \Delta_{i}^{(r)}(Q_r^{n,i,N}f)=\sum_{i=1}^{n-N}  A_{i}^{(r)}(Q_r^{n,i,N}f)+\sum_{i=1}^{n-N}  B_{i}^{(r)}(Q_r^{n,i,N}f).
\end{equation}
First, focusing on the $L^1$ part, we get the following bound
\begin{align*}
\sup_{n>N} \big\vert \sum_{i=1}^{n-N} B_{i}^{(r)}(Q_r^{n,i,N}f) \big\vert
&\leq 
\sum_{n>N} \sum_{i=1}^{n-N} \vert B_{i}^{(r)}(Q_r^{n,i,N}f)\vert \\
&\leq  \sum_{i=1}^{\infty} B_{i}^{(r)+}  \sum_{n\geq i+N} 2a_r^{n-i}\lVert f\rVert_{\mtcB(V^\star)}
\leq  \lVert f\rVert_{\mtcB(V^\star)} \, \frac{2a_r^N}{1-a_r}\sum_{i=1}^{\infty} B_{i}^{(r)+}, 
\end{align*}
where we recall \eqref{borne} and the $a_r$ contraction of $T_r$. This proves a.s. and $L^1$ convergence of the $L^1$ component involving $B$. 

Let us turn to the $L^2$ part. 
Recalling the computations of the previous proof (Proposition \ref{desmartingales}) involving  $\sum_{i=1}^{n}  A_{i}^{(r)}(f)$, we get
similarly
\begin{align*}
\E_{\delta_x}\left( \left(\sum_{i=1}^{n-N}  A_{i}^{(r)}(Q_r^{n,i,N}f)\right)^2 \right) &=  \sum_{i=1}^{n-N} 
\E_{\delta_x}\left( \left( A_{i}^{(r)}(Q_r^{n,i,N}f)\right)^2\right)\\
&\leq \sum_{i=1}^{n-N}\lVert Q_r^{n,i,N}f\rVert_{\mtcB(V^\star)}^2 \lambda^{-ir} \lambda^{-ir} S_{ir} V_{i}^{(r)}(x)\\
&\leq 4 \, \lVert f\rVert_{\mtcB(V^\star)}^2\, \sum_{i=1}^{n-N} a_r^{2(n-i)} \,   u_i(x),
\end{align*}
where 
$$u_i(x):= \lambda^{-ir}\left(h(x)\gamma(V_{i}^{(r)})+a_{ir}V^\star(x)\lVert V_{i}^{(r)} \rVert_{\mtcB(V^\star)} \right).$$
Now recalling that  $a_r<1$ and \eqref{abime} and \eqref{abim}, $\sum\limits_i u_i(x)<\infty$ and
$$\sum_{N} \sum_{n>N}\sum_{i=1}^{n-N} \, a_r^{2(n-i)} u_i(x)  = \sum_{i=1}^{\infty} u_i(x)\,\sum_N \sum_{k>N} a_r^{2k}<{\infty}.$$
Thus
 \begin{align*}
 \sum_{N} \sum_{n>N}  \E_{\delta_x}\left( \left(\sum_{i=1}^{n-N}  A_{i}^{(r)}(Q_r^{n,i,N}f)\right)^2\right) & <\infty.
\end{align*}
Using that
$$\sup_{n> N} \left(\sum_{i=1}^{n-N}  A_{i}^{(r)}(Q_r^{n,i,N}f)\right)^2\leq \sum_{n>N}  \left(\sum_{i=1}^{n-N}  A_{i}^{(r)}(Q_r^{n,i,N}f)\right)^2,$$
we get 
$$\sum_{N\geq 1} \E_{\delta_x}\left(\sup_{n> N} \left(\sum_{i=1}^{n-N}  A_{i}^{(r)}(Q_r^{n,i,N}f)\right)^2 \right) <\infty \quad \text{a.s.}$$  
This ensures both the following  $L^2$ convergence 
$$\E_{\delta_x}\left(\sup_{n> N} \left(\sum_{i=1}^{n-N}  A_{i}^{(r)}(Q_r^{n,i,N}f)\right)^2 \right)\stackrel{N\rightarrow\infty}{\longrightarrow} 0,  $$
 and  
$$\sum_{N\geq 1} \sup_{n> N} \left(\sum\limits_{i=1}\limits^{n-N}  A_{i}^{(r)}(Q_r^{n,i,N}f)\right)^2 <\infty  \quad \text{a.s.} $$
This latter convergence yields 
$$ \sup_{n> N} \left\vert \sum_{i=1}^{n-N}  A_{i}^{(r)}(Q_r^{n,i,N}f)\right\vert \stackrel{N\rightarrow\infty}{\longrightarrow} 0 \qquad \text{a.s.} $$
It ensures  the  a.s. and $L^1$ convergence to $0$ of the $L^2$ part involving $A$ in \eqref{decDelta2}. Recalling that these convergences hold also for the $L^1$ part represented by $B$ proves the first part of the lemma. \\
We now prove the second part of the lemma. To do so, let us recall the decomposition  $M_n^{(r)}(f)=\mathcal A_n^{(r)}(f)+\mathcal B_n^{(r)}(f)$ from proposition~\ref{desmartingales} where
$$\mathcal A_n^{(r)}(f):=\sum_{i=1}^n A_i^{(r)}{(f)} \stackrel{n\rightarrow \infty}{\longrightarrow} \mathcal A_{\infty}^{(r)}(f)\quad  \text{a.s. and in } L^2,$$
and 
$$\mathcal B_n^{(r)}(f):=\sum_{i=1}^n B_i^{(r)}{(f)} \stackrel{n\rightarrow \infty}{\longrightarrow} \mathcal B_{\infty}^{(r)}(f)
\quad \text{a.s. and in } L^1$$
and 
$$M_\infty^{(r)}(f)=A_{\infty}^{(r)}(f)+B_{\infty}^{(r)}(f).$$
We first prove that $\mathcal A_{\infty}^{(r)}(T_n^{(r)}f)$ goes to $0$ as $n$ tends to infinity, a.s. and in $L^2$, and then that  $\mathcal B_{\infty}^{(r)}(T_n^{(r)}f)$ goes to $0$ as $n$ tends to infinity, a.s. and in $L^1$. 
Indeed \eqref{carreA} ensures 
$$\E(\mathcal A_{\infty}^{(r)}(T_i^{(r)}f)^2)=\lim_{n\rightarrow\infty}\E(\mathcal A_{n}^{(r)}(T_i^{(r)}f)^2)\leq C_r \lVert T_i^{(r)}f\rVert_{\mtcB(V)}^2 \leq C_r a_r^i \lVert f\rVert_{\mtcB(V)}^2, $$
where $C_r$ is a finite constant. As $a_r<1$, $\mathcal A_{\infty}^{(r)}(T_i^{(r)}f)$ goes to $0$ in $L^2$. Moreover
$$\sum_{i\geq 1} \E(\mathcal A_{\infty}^{(r)}(T_i^{(r)}f)^2)<\infty, $$
which ensures that 
$$\sum_{i\geq 1}  A_{\infty}^{(r)}(T_i^{(r)}f)^2 <\infty \qquad \text{a.s.}$$
The previous convergence implies that $A_{\infty}^{(r)}(T_i^{(r)}f)$ tends a.s. to $0$ as $i$ tends to infinity.\\
Let us now turn to $\mathcal B_{\infty}^{(r)}$. We know  from \eqref{domBL1ps}
that
\begin{align*}
\vert\mathcal B_{\infty}^{(r)}(T_i^{(r)}f) \vert  \leq  \, \lVert T_i^{(r)} f\rVert_{\mtcB(V)} \,\sum_{j=1}^{\infty}
    B^{(r)+}_j \leq a_r^i \lVert f\rVert_{\mtcB(V)} \,\sum_{j=1}^{\infty}
    B^{(r)+}_j,
    \end{align*}
    where $\sum_{j=1}^{\infty} 
    \E_{\delta_x}(B^{(r)+}_j)<\infty$. This ends the proof by letting $i\rightarrow \infty$.
 \end{proof}

 \subsection{Proof of Theorem \ref{th:main-discret} and complements}\label{sec:main-discret}

We are now in a position to prove the desired results.
\begin{Prop}\label{prop:final_res-discrete} 
Under the Assumptions of Theorem \ref{th:main-discret}, for any $f \in \mathcal B(V)$ and for any $r \in \N$ such that $a_r<1$, the following convergence holds a.s.~and in $L^1$
\begin{align*}
\lim_{n\rightarrow \infty}    X_n^{(r)}(f)=\gamma(f) \, W.
\end{align*}
\end{Prop}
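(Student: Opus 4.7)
I would start from the decomposition in Lemma \ref{dec} applied with $Z_0 = \delta_x$, namely
\begin{align*}
X_n^{(r)}(f) \;=\; T_r^n f(x) \;+\; \gamma(f)\, X_{n-1}^{(r)}(h) \;+\; R_n^{(r)}(f),
\end{align*}
so the proof reduces to showing, in both the a.s.~and $L^1$ senses, that (i) $T_r^n f(x) \to 0$, (ii) $\gamma(f)\, X_{n-1}^{(r)}(h) \to \gamma(f)\, W$, and (iii) $R_n^{(r)}(f) \to 0$. Item (i) is immediate from the contraction estimate \eqref{cacontracte}: $|T_r^n f(x)| \leq a_r^n V^\star(x)\|f\|_{\mathcal{B}(V^\star)}$ with $a_r < 1$. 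Item (ii) follows from Proposition \ref{desmartingales} applied along the subsequence of times $(n-1)r$, since $(X_n^{(r)}(h))_n$ is the subsequence $(X_{nr}(h))_n$ of the harmonic martingale.

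The bulk of the work is item (iii). Using the second expression for $R_n^{(r)}(f)$ from Lemma \ref{dec}, I would introduce an auxiliary truncation parameter $N$ and split
\begin{align*}
R_n^{(r)}(f) \;=\; \underbrace{M_n^{(r)}(f) + \sum_{i=1}^{N-1} M_{n-i}^{(r)}\bigl(T_r^{i-1}(T_r-\textsc{Id})f\bigr)}_{\mathrm{head}_N(n)} \;+\; \underbrace{\sum_{i=N}^{n-1} M_{n-i}^{(r)}\bigl(T_r^{i-1}(T_r-\textsc{Id})f\bigr)}_{\mathrm{tail}_N(n)}.
\end{align*}
Lemma \ref{Restepetit} makes $\sup_{n>N}|\mathrm{tail}_N(n)|$ vanish as $N\to\infty$, both a.s.~and in $L^1$. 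For each fixed $N$, the head is a finite linear combination of martingales, each of which converges a.s.~and in $L^1$ by Proposition \ref{desmartingales}; linearity of the limit combined with the telescoping identity
\begin{align*}
\sum_{i=1}^{N-1} T_r^{i-1}(T_r-\textsc{Id}) \;=\; T_r^{N-1} - \textsc{Id}
\end{align*}
then gives $\lim_n \mathrm{head}_N(n) = M_\infty^{(r)}(T_r^{N-1}f)$.

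To close the argument, I would let $N \to \infty$. Revisiting the $L^2$ bound \eqref{carreA} for the $A$-part and the $L^1$ bound \eqref{borne} for the $B$-part in the proof of Proposition \ref{desmartingales} yields an estimate of the form $\mathbb{E}_{\delta_x}|M_\infty^{(r)}(g)| \leq C V^\star(x)\,\|g\|$ with a suitable norm of $g$; combined with $\|T_r^{N-1}f\|_{\mathcal{B}(V^\star)} \leq a_r^{N-1}\|f\|_{\mathcal{B}(V^\star)} \to 0$, this forces $M_\infty^{(r)}(T_r^{N-1}f) \to 0$ in $L^1$. The uniform tail control from Lemma \ref{Restepetit} then yields $R_n^{(r)}(f) \to 0$ in $L^1$ via a standard $\varepsilon/2$ split, and the corresponding a.s.~statement follows by first choosing $N$ to control the tail a.s.~uniformly in $n$, then taking $\limsup_n$ of the head.

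The main obstacle is a bookkeeping one: the martingale controls in Proposition \ref{desmartingales} are naturally expressed in the $\mathcal{B}(V)$-norm (since the truncation in \eqref{def:A}--\eqref{def:B} is at level $V$), while the semigroup contraction of $T_r$ lives in the larger space $\mathcal{B}(V^\star)$. Reconciling the two requires either extending the $A$-$B$ estimates to $\mathcal{B}(V^\star)$ for the specific iterated functions $T_r^{N-1}f$ that arise, or running the $A$-$B$ decomposition through the head-tail split and invoking \eqref{borne}--\eqref{carreA} directly before passing to the martingale limit. In both cases the key quantitative ingredient is the geometric decay $a_r^{N-1}$ against the finite second-moment control of the $A$-part and the summability of the dominating series for the $B$-part.
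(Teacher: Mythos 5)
Your proposal follows essentially the same route as the paper: the decomposition of Lemma \ref{dec}, the head--tail split at level $N$ with Lemma \ref{Restepetit} controlling the tail and Proposition \ref{desmartingales} giving convergence of each martingale in the head, and the telescoping identity to identify the limit of $R_n^{(r)}(f)$ as $0$. Your explicit justification that $M_\infty^{(r)}(T_r^{N-1}f)\to 0$ via $\|T_r^{N-1}f\|_{\mathcal{B}(V^\star)}\leq a_r^{N-1}\|f\|_{\mathcal{B}(V^\star)}$, and your remark on reconciling the $\mathcal{B}(V)$ and $\mathcal{B}(V^\star)$ norms, only make explicit what the paper leaves implicit in its appeal to ``telescoping sums''.
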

\begin{proof} We use Lemma 
 \ref{dec} to write
\begin{align*}
    X_n^{(r)}(f)=X_0^{(r)}(T_r^nf)+\gamma(f) \, X^{(r)}_{n-1}(h)\, + \, R_n^{(r)}(f),
\end{align*}
and  split the {remainder}  $R_n^{(r)}$ as
follows for any $N\geq 1.$
\begin{align*}
R_n^{(r)}(f)&=M_n^{(r)}(f)+\sum_{i=1}^{N-1} M_{n-i}^{(r)}(T^{i-1}_r(T_r-\textsc{Id})f)
+\sum_{i=N}^{n-1} M_{n-i}^{(r)}(T^{i-1}_r(T_r-\textsc{Id})f).
\end{align*}
We use Lemma \ref{Restepetit} 
to choose a random integer $N$ large enough so that the last term is small uniformly for any $n>N$ a.s. 
Similarly, we can choose $N$ (non random) large enough so that the $L^1$ norm of the last term is small enough. Then we use  Proposition \ref{desmartingales} to obtain the convergence of $M_n^{(r)}(f)$  to $M_{\infty}^{(r)}(f)$ and the convergence of $M_{n-i}^{(r)}(T^{i-1}_r(T_r-\textsc{Id})f)$ to $M_{\infty}^{(r)}(T^{i-1}_r(T_r-\textsc{Id})f)$ as $n$ tends to infinity, for $i\leq N$. Gathering these estimates yields
\begin{align*}
\lim_{n\rightarrow \infty}R_n^{(r)}(f)&=M_{\infty}^{(r)}(f)+\sum_{i=1}^{\infty} M_{\infty}^{(r)}(T^{i-1}_r(T_r-\textsc{Id})f)\\
&=M_{\infty}^{(r)}(f)+\sum_{i=1}^{\infty} M_{\infty}^{(r)}(T_{r}^if)-M_{\infty}^{(r)}(T^{i-1}_rf)=0 \qquad \text{a.s. and in } L^1,
\end{align*}
where we relied on telescoping sums exploiting that $M_{\infty}^{(r)}(T_r^if)$ goes to $0$  as $i$ tends to infinity from Lemma \ref{Restepetit}. Adding that when $n\rightarrow \infty$, $T_r^nf$ tends to $0$ and that $X^{(r)}_{n-1}(h)$ tends a.s. and in $L^1$ to $W$ (see Proposition \ref{desmartingales}) ends the proof.
\end{proof}
\begin{proof}[Proof of Theorem \ref{th:main-discret}]
We derive from this result the counterpart for $r=1$, by using the result with $X^{(r,k)}_n=X_{nr+k}$ for $0\leq k \leq r-1$ to cover the full set of integers.
Fix $r$ such that $a_r<1$. Starting from $Z_0=\delta_x$, Proposition \ref{prop:final_res-discrete} implies that $X^{r,k}_n(f)$ converges a.s~and in $L^1$ to $\gamma(f)W^{(r,k,x)}$. We now need to verify that the limits coincide for $k=0, \ldots, r-1$.
Indeed, considering  $f=h $ in this limit, $X^{(r,k)}_n(h)$ tends to $W^{(r,k,x)}$. Using the convergence of the  martingale $X^{(1)}_ n=Z_n(h)\lambda^{-n}$ towards $W^{(x)}$, we  can identify the limits  and conclude that $ W^{(r,k,x)}=W^{(x)}$ a.s. It yields
\begin{align*}
\lim_{n\rightarrow \infty}    X_n(f)=\gamma(f) \, W \quad  \text{a.s. and in } L^1
\end{align*}
for $f\in \mathcal B(V)$ and ends the proof of Theorem \ref{th:main-discret}.
\end{proof}

Let us mention that the previous results  of Proposition \ref{prop:final_res-discrete} and Theorem \ref{th:main-discret} provide bounds on $\mathbb P_{\delta_x}(W>0)$ and comparison to function $h,V, V^{\star}$, using in particular Paley–Zygmund inequality. But, up to our knowledge and as counterexamples have shown \cite{braunsteins2019pathwise,2025arXiv250305575A}, the event $\{W>0\}$ may not coincide with the survival event $\{\forall n\geq 0, \mathbb G_n\ne \emptyset\}$. However, in general, for some measurable set $A$ such that $\gamma(A)>0$, the  survival event  $\{\limsup_n Z_n(A)>0\}$  is included (and thus equal to) the local divergence event $\{\limsup_n Z_n(A)=\infty\}$ in the supercritical case. Therefore, proving that  $\mathbb P_{\delta_x}(W>0)$  for $x\in A$ is sufficient to conclude that $W>0$ on this event.  The discrete case is particularly simple since $A$ can be taken as a singleton, see e.g.  \cite{bansaye2023growth} for an example.


Before moving onto the continuous-time setting, we complement the discrete-time results with the following extension of convergences. We now associate to individuals more than a trait in $\mathcal X$. More precisely, for each individual $u$, we associate a random variable $Box(u)$. It is of the form $Box(u)=(Z(u),W(u))$, where $Z(u)\in \mathcal X$ is the trait of $u$ as before and $W(u)$ is a r.v. taking values in $\mathcal W$, so that  $Box(u)$  takes values in  a larger measurable state space $\overline{\mathcal X}=\mathcal X \times \mathcal W$. This extension of the trait space will not impact the original branching process $X$, but add useful information on the population. Such construction is linked to branching with characteristics and studies of Crump Mode Jagers processes \cite{zbMATH03555176}. It will be useful to prove the results in continuous-time.    We denote by $(\overline{\mathcal F}_n)_n$ the corresponding filtration, which extends  filtration $({\mathcal F}_n)_n$ :
$$\overline{\mathcal F}_n=\sigma(Box(v), Z(u) : v \prec u , u   \in \mathbb G_n) \supset \mathcal F_n. $$
We require that it verifies the following Branching-Markov type assumption : for any measurable non-negative function $F$ on $\overline{\mathcal{X}}$, 
\begin{align}
\E\left(\prod_{u\in \mathbb G_n} F(Box(u)) \, \vert \, \overline{\mathcal F}_n\right)=\prod_{u\in \mathbb G_n} \mu_{Z(u)}(F),
\label{branchetendu}
\end{align}
where $\mu_x$ is the law of $Box(u)$ when the trait of $u$ is $Z(u)=x$ : 
$$\mu_x(F)=\E_{(\varnothing , x)}(F(Box(\varnothing)).$$
A typical example is $Box(u)=( Z(u), \Theta_{Z(u)}(u))$ where we pair the trait of $u$ and the ones of its offspring. We will use this framework in  continuous-time by plugging the evolution of individual $u$  during the time interval $[n\delta, (n+1)\delta)$ into $Box(u)$, see forthcoming Section~\ref{sec:continuous-time-and-examples}. 
\begin{Prop} \label{extensionpsL1}
   Let $F : \overline{\mathcal X} \rightarrow \R_+$ measurable {be such} that  
   $$\parallel  F\parallel_{\overline{\mathcal B}(h)}=\sup_{(x,w)\in \overline{\mathcal X}}\frac{F(x,w)}{h(x)} <\infty.$$ Then
   $$\lim_{n\rightarrow \infty} \lambda^{-n} \sum_{u \in \mathbb G_n} F(Box(u))=W \overline{\gamma} \quad \text{a.s. and in } L^1, $$
   where $\overline{\gamma}(F)= \gamma(\mu_.(F))= \int_{\mathcal X} \gamma(dx) \mu_x(F)$.
\end{Prop}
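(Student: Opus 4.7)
The natural plan is to recognise that the enriched process $(Box(u))_{u\in\mathbb G_n,\,n\geq 0}$ is itself a discrete-time Markov branching process on the augmented state space $\overline{\mtcX}$, whose reproduction for an individual with augmented trait $(x,w)$ depends only on the first coordinate $x$: the parent first draws $\Theta_x$ to obtain the children's $\mtcX$-components, and each child $u$ then independently samples its $W(u)$ according to the conditional second marginal of $\mu_{Z(u)}$. Assumption~\eqref{branchetendu} is exactly the conditional independence that makes this identification legitimate, so I would then apply Theorem~\ref{th:main-discret} to this augmented object with the test function $F$.

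To do so I must exhibit an eigentriplet and Lyapunov functions on $\overline{\mtcX}$ fitting Assumption~\ref{ass:cvSt} and the $L\log L$ moment condition~\eqref{lloglass}. The natural candidates are
$$\tilde h(x,w):=h(x),\qquad \tilde\gamma(dx,dw):=\gamma(dx)\,\mu_x(dw),\qquad \tilde V^\star(x,w):=V^\star(x),\qquad \tilde V(x,w):=V(x).$$
Combining~\eqref{branchetendu} with the fact that the reproduction only sees the $\mtcX$-component yields the product-type identity
$$\tilde S_n F(x,w):=\E_{(x,w)}\Bigl[\sum_{u\in\mathbb G_n}F(Box(u))\Bigr]=S_n g(x),\qquad \text{where } g(x):=\mu_x(F).$$
From this, $\tilde S_n\tilde h=\lambda^n\tilde h$, $\tilde\gamma\,\tilde S_n=\lambda^n\tilde\gamma$, $\tilde\gamma(\tilde h)=\gamma(h)=1$, $\tilde\gamma(\tilde V^\star)=\gamma(V^\star)<\infty$ and $\sup_{\overline{\mtcX}}\tilde h/\tilde V^\star<\infty$ are immediate. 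Moreover, for every $|F|\leq \tilde V^\star$ one has $|g|\leq V^\star$, so the contraction~\eqref{ergodnc} lifts with the \emph{same} sequence $(a_n)_n$:
$$\bigl|\lambda^{-n}\tilde S_nF(x,w)-\tilde h(x,w)\tilde\gamma(F)\bigr|=\bigl|\lambda^{-n}S_ng(x)-h(x)\gamma(g)\bigr|\leq V^\star(x)\,a_n.$$
Finally $\tilde Z_k(\tilde V)=Z_k(V)$, so the $L\log L$ moment condition~\eqref{lloglass} on $\overline{\mtcX}$ reduces to the hypothesis already in force.

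Since $F\leq \parallel F\parallel_{\overline{\mtcB}(h)}\,h\leq \parallel F\parallel_{\overline{\mtcB}(h)}\bigl(\sup_{\mtcX}h/V\bigr)\tilde V$, the function $F$ belongs to $\mtcB(\tilde V)$, and Theorem~\ref{th:main-discret} applied to the augmented process yields
$$\lambda^{-n}\sum_{u\in\mathbb G_n}F(Box(u))=\lambda^{-n}\tilde Z_n(F)\longrightarrow \tilde\gamma(F)\,\tilde W\qquad \text{a.s.\ and in }L^1,$$
where $\tilde W$ is the a.s.\ and $L^1$ limit of $\lambda^{-n}\tilde Z_n(\tilde h)=\lambda^{-n}Z_n(h)$; hence $\tilde W=W$. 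Since $\tilde\gamma(F)=\int\gamma(dx)\mu_x(F)=\overline{\gamma}(F)$, this is exactly the statement. I expect the only delicate point to be the formal identification of $(Box(u))_{u,n}$ as a branching process in the sense of Section~\ref{sec:construction}; once that is secured, everything reduces to bookkeeping through the factorisation $\tilde S_nF(x,w)=S_ng(x)$.
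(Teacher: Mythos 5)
There is a genuine gap: your identification of $(Box(u))_u$ as a branching process on $\overline{\mathcal X}$ whose reproduction kernel ignores the second coordinate, and hence your key factorisation $\tilde S_nF(x,w)=S_n(\mu_\cdot(F))(x)$, are false in general. Assumption \eqref{branchetendu} only constrains the conditional law of $Box(u)$ given the \emph{past} (the boxes of strict ancestors together with the trait $Z(u)$); it says nothing about the joint law of $W(u)$ and the \emph{offspring} of $u$. In the paper's ``typical example'' $Box(u)=(Z(u),\Theta_{Z(u)}(u))$, the second coordinate $W(u)$ determines the children's traits, so the children's $\mtcX$-components are certainly not drawn independently of $w$; one computes for instance $\tilde S_1\tilde h(x,w)=\sum_{i}h(w_i)\neq \lambda h(x)=\lambda\tilde h(x,w)$, so your candidate $\tilde h$ is not harmonic for the augmented first-moment semigroup and Assumption \ref{ass:cvSt} fails for your triplet. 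The process you actually describe (resample each $W(uk)$ from the conditional second marginal of $\mu_{Z(uk)}$, independently of everything else) has the same one-dimensional marginals $\mathcal{L}(\tilde Z_n)$ as the true one, but a different joint law across generations; almost sure convergence proved for that surrogate does not transfer to the original process.

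The correct version of your factorisation, and the one the paper uses, is $\overline S_nF(x):=\E_{\delta_x}\big(\sum_{u\in\mathbb G_n}F(Box(u))\big)=S_n(\mu_\cdot(F))(x)$, where one conditions only on the initial $\mtcX$-trait and not on the initial box. The paper then avoids treating the boxes as a branching process altogether: it writes $\overline X^{(r)}_{n+1}(F)=X^{(r)}_n(T_r(\mu_\cdot F))+\gamma(\mu_\cdot F)\,X^{(r)}_n(h)+\overline\Delta^{(r)}_{n+1}(F)$, applies Theorem \ref{th:main-discret} to the base process for the first two terms (this is where $\|\mu_\cdot(F)\|_{\mtcB(h)}\leq\|F\|_{\overline{\mtcB}(h)}$ is used), and kills the single remaining martingale increment $\overline\Delta^{(r)}_{n+1}(F)$ by redoing the $A/B$ truncation estimates of Proposition \ref{desmartingales} with respect to the enlarged filtration $\overline{\mathcal F}$. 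To salvage your plan you would have to add the assumption that $W(u)$ is conditionally independent of the offspring of $u$ given $Z(u)$ --- which excludes precisely the typical example the proposition is designed for --- or else switch to the paper's one-step decomposition.
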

In this result, we work with test functions $F$ strongly dominated by $h$. It will be enough for our purpose but could be relaxed by truncation arguments.
\begin{proof} We just give the main lines based on the proof of Theorem  \ref{th:main-discret}. We consider the extended empirical measure  defined for $n\geq 0$ by
$$\overline{Z}_n=\sum_{u \in \mathbb G_n} \delta_{Box(u)}, \quad \overline{X}^{(r)}_n= \frac{\overline{Z}_{nr}}{\lambda^{nr}}. $$
Our assumption on $F$ ensures that 
$$\overline{Z}_n(F)\leq \parallel F\parallel_{\overline{\mathcal B}(h)}  Z_n(V).$$ Similarly we define,
$$\overline{Z}^{(u)}_p= \sum_{ v  \in \mathcal U : uv\in \mathbb G_{n+p}} \delta_{Box(uv)}$$
and 
\begin{align*} 
\overline{\Delta}_{n+1}^{(r)}(F)&=\overline{X}_{n+1}^{(r)}(f)-\E\left(\overline{X}_{n+1}^{(r)}(F) \, \vert \, \overline{\mathcal F}_n^{(r)}\right) \\
&=\lambda^{-nr}\sum_{u\in \mathbb G_{nr}}
\left\{\lambda^{-r}\overline{Z}^{(u)}_r(F)-\E_{\delta_{Z(u)}}(\overline{X}^{(r)}(F))\right\} 
=\overline{A}_{n+1}^{(r)}(F)+\overline{B}_{n+1}^{(r)}(F),
 \end{align*}
where
 \begin{align*} 
\overline{A}_{n+1}^{(r)}(F)&=\lambda^{-nr}\sum_{u\in \mathbb G_{nr}} \left\{ \lambda^{-r}\overline{Z}^{(u)}_r(F)\mathbf{1}_{Z^{(u)}_r(V)\leq \lambda^{nr}} -\E_{\delta_{Z(u)}}(\overline{X}^{(r)}(F)\mathbf{1}_{Z_r(V)\leq \lambda^{nr}}) \right\}, \\
\overline{B}_{n+1}^{(r)}(F)&=\lambda^{-nr}\sum_{u\in \mathbb G_{nr}} \left\{\lambda^{-r}\overline{Z}^{(u)}_r(F)\mathbf{1}_{Z^{(u)}_r(V)>\lambda^{nr}} -\E_{\delta_{Z(u)}}(\overline{X}^{(r)}(F)\mathbf{1}_{Z_r(V)>\lambda^{nr}})\right\}.
 \end{align*}
The proofs can be achieved following similar arguments as above, but relying now on the filtration $\overline{\mathcal F}$. Indeed, we observe that 
$$\bar{Z}_n(F)\leq \parallel F\parallel_{\overline{\mathcal B}(h)}  Z_n(V),$$
and that $S_n(f)$ for $f\in \mtcB(V)$ is now replaced by 
 $\overline{S}_n(F)$ for $F\in \overline{\mtcB}(V)$
defined by
$$\overline{S}_nF(x)=\E_{\delta_x}\left(\sum_{u\in \mathbb G_n} F(Box(u))\right)=\E_{\delta_x}\left(\sum_{u\in \mathbb G_n} \mu_{Z(u)}(F)\right)=S_n(\mu_.(F))(x),$$
since  Assumption  \eqref{branchetendu} ensures that
$$\E(F(Box(u)) \vert \overline{\mathcal F_n})=\E(F(Box(u)) \vert Z(u))=\E_{Z(u)}(F(Box(\varnothing)))=\mu_{Z(u)}(F).$$
Note also  that we can define  $\overline{T}^{n}= \lambda^{-n}\overline{S}^{n}-\overline{\gamma} h$ and still have 
$\gamma(\overline{T}^{n}F)=0$. Moreover  
$$ \parallel \mu_.(F)   \parallel_{{\mathcal B}({h})}
\leq \parallel F\parallel_{\overline{\mathcal B}(h)}<\infty$$
 by assumption   and recall that $h\in \mathcal B(V)$ and  $V\leq V^{\star}$, so  $\parallel \mu_.(F)   \parallel_{{\mathcal B}({V}^{\star})} \leq \parallel \mu_.(F)   \parallel_{{\mathcal B}(V)}<\infty$. Now following the first lines of the proof of Lemma \ref{dec},
 we obtain the following decomposition
\begin{align*}
\overline{X}_{n+1}^{(r)}(F)=X^{(r)}_n(T_r(\mu_\cdot F))+\gamma(\mu_\cdot F) \, X^{(r)}_n(h)+\overline{\Delta}_{n+1}^{(r)}(F).
\end{align*}
Adding that $\mathcal B(h)$ is {stable under} $T_r$, we get that the two first terms of this decomposition converge a.s. and in $L^1$ thanks to Theorem~\ref{th:main-discret}

We then can show that $\overline{\Delta}_{n+1}^{(r)}(F)$ also converges to $0$ a.s.~and in $L^1$ by following the previous proof and observing simply that 
\begin{align*}
    \vert \overline{B}_n^{(r)}(F) \vert &\leq \lVert F \rVert_{\overline{\mathcal{B}}(V)} B_n^{(r)+},
\end{align*}
and
 \begin{align*}
\sum_{i=1}^\infty  \, \E\left( \overline{A}_{i}^{(r)}(F)^2 \,  \vert \, \mathcal{F}^{(r)}_{i-1} \right)  & \leq \,  \lVert F\rVert_{\overline{\mtcB}(V)}^2 \,\sum_{i=0}^{\infty}  \lambda^{-ir} X_i^{(r)}(V^{(r)}_i).
 \end{align*}
 The remaining steps follow arguments of the proof of Theorem \ref{th:main-discret}.
 \end{proof}

\section{Convergence and applications in continuous-time}\label{sec:continuous-time-and-examples}
\subsection{Main result}\label{sec:gen-cv-result}

In this section, we further assume that $\mathcal{X}$ is a separable metric space and consider a continuous-time, measure-valued, càdlàg Markov branching process $(Z_t)_{t\geq 0}$ on this space.  We refer to  forthcoming Section \ref{se:lgolfacile} for existence and details. In particular, $Z$ is constructed as finite punctual measure on $\mathcal X$ endowed with the narrow topology.\\

For any $x\in \mathcal X$, we consider its first moment semigroup  
$$ S_t f(x) = \mathbb{E}_{\delta_x}(Z_t(f)).$$
As in the previous section, it will be well defined for functions $f \in \mathcal B(V^{\star})$ where  $V^{\star}$ is non-negative and for any $t>0$, $S_tV^{\star} \in \mathcal B(V^{\star}).$

\begin{assumption}
\label{ass:cv-continu}
There exists a positive triplet $(\gamma,h,\lambda)$ of eigenelements
    such that ${\lambda >1}$, $\gamma$ is a probability on $\mathcal X$, $h : \mathcal X\rightarrow (0,\infty)$ is measurable and lower semi-continuous $\gamma$-almost everywhere and  for $t\geq 0$,
\begin{equation}
\label{eigencont}
 \gamma S_t ={\lambda^t} \gamma, \quad S_t h={\lambda^t} h, \quad  \gamma(h)=1.
 \end{equation}
 \end{assumption}

We now show the continuous-time counterpart of Theorem~\ref{th:main-discret}, where we write $W$ the limit of martingale $(\lambda^{-t}Z_t(h))_{t\geq 0}$.
\begin{theorem}\label{th:main-cont}  Under Assumption \ref{ass:cv-continu}, we further assume that there exist two positive measurable functions $V$ and  $V^{\star}$ on $\mathcal X$  such that $h\in \mtcB(V)$ and $V\in  \mtcB(V^{\star})$ and 
$\gamma(V^{\star})<\infty,$ 
and a decreasing function $a$ on $\R_+$ such that for any $t\geq 0$,
\begin{align}
\label{eq:ergo}
    \sup_{|f|\leq V^{\star}} \left| {\lambda^{-t}} S_t f(x)  - h(x) \gamma(f) \right| \leq a(t) V^{\star}(x), 
\qquad \int_1^\infty \frac{a(s)}{s} ds<\infty.
\end{align}
Assume also  that for any $t>0,$  
\begin{equation}
\label{eq:xlogx-cont}
    \sup_{x \in \mathcal X} \frac{\E_{\delta_x}(Z_{t}(V)\log^\star Z_{t}(V))}{V^{\star}(x)} <\infty.
\end{equation} 
Then, for any initial value $x\in\mathcal X$, {the limit of the martingale} $W$ satisfies  
\begin{align*}
\E_{\delta_{x}}(W)=h(x)\quad \text{and } \quad \lim_{t\rightarrow \infty}  {\lambda^{-t}} Z_t(h)= W \quad   \mathbb P_{\delta_x} \, \text{a.s. and in } L^1.
\end{align*}
Moreover, the following convergence holds  for any  bounded continuous $\gamma$-a.e.\ function $f$,
\begin{align*}
\lim_{t\rightarrow \infty}    {\lambda^{-t}} Z_t(fh)=\gamma(fh) \, W\quad \mathbb P_{\delta_x} \,  \text{a.s. and in } L^1.
\end{align*}
\end{theorem}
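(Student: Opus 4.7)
The plan is to reduce to the discrete-time Theorem~\ref{th:main-discret} by applying it to the $\delta$-skeleton $(Z_{n\delta})_{n \geq 0}$ for a suitably small step $\delta > 0$, and then to fill in the intermediate times using the branching property together with Proposition~\ref{extensionpsL1}.

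I would first pick $\delta \in (0, t_0]$ small enough that $a(\delta) < 1$; this is possible since $\int_1^\infty a(s)/s\, ds < \infty$ combined with monotonicity of $a$ forces $a(t) \to 0$. The skeleton is a discrete-time Markov branching process whose first-moment semigroup $(S_{n\delta})_n$ has Perron eigenvalue $\lambda^\delta > 1$ and the same eigen-pair $(h, \gamma)$. Setting $a_n := a(n\delta)$, Assumption~\ref{ass:cvSt} is immediate from \eqref{eq:ergo}; the summability $\sum_n a_n/n < \infty$ follows by comparison with $\int_{1/\delta}^\infty a(u\delta)/u\, du$ via the monotonicity of $a$. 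Condition~\eqref{lloglass} at $k=1$ is exactly \eqref{eq:xlogx-cont} for $t=\delta$, and its propagation to all $k$ is handled by Proposition~\ref{Propos-propag}, the bound $S_\delta V \leq S_\delta V^\star \leq C V^\star$ being extracted from \eqref{eq:ergo}. Theorem~\ref{th:main-discret} then yields, for every $f \in \mathcal{B}(V)$,
\begin{equation*}
\lambda^{-n\delta} Z_{n\delta}(f) \xrightarrow[n\to\infty]{\text{a.s.\ and } L^1} \gamma(f)\,W^{(\delta)}, \qquad \mathbb{E}_{\delta_{x_0}}(W^{(\delta)}) = h(x_0).
\end{equation*}

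Since $(\lambda^{-t} Z_t(h))_{t \geq 0}$ is a non-negative càdlàg martingale (from the branching property and $S_t h = \lambda^t h$), it converges almost surely to some $W \in [0,\infty)$. Uniqueness of limits along any $\delta$-subsequence forces $W^{(\delta)} = W$ a.s., and $L^1$-convergence along the skeleton promotes the continuous-time martingale convergence to $L^1$, giving $\mathbb{E}_{\delta_{x_0}}(W) = h(x_0)$; this already settles the case $f = h$.

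To extend from the grid to arbitrary $t \to \infty$ for general $f \in \mathcal{B}(h)$, I would apply Proposition~\ref{extensionpsL1} to the $\delta$-skeleton with $Box(u) = (Z(u), \xi^{(u)})$, where $\xi^{(u)} = (Z^{(u)}_s)_{s \in [0, \delta)}$ is the trajectory of the subprocess rooted at $u \in \mathbb{G}_{n\delta}$ over $[n\delta, (n+1)\delta)$. The branching decomposition
\begin{equation*}
\lambda^{-t}Z_t(f) - \gamma(f)\lambda^{-n\delta}Z_{n\delta}(h) = \lambda^{-n\delta}\sum_{u \in \mathbb{G}_{n\delta}}\bigl(\lambda^{-s}Z^{(u)}_s(f) - \gamma(f)h(Z(u))\bigr)
\end{equation*}
for $t = n\delta + s$ reduces the problem to controlling the sum of a trajectory functional $F_\delta$ evaluated at the boxes. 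The main obstacle is to set up $F_\delta$ so that its pointwise norm $\|F_\delta\|_{\overline{\mathcal{B}}(h)}$ is finite (as required by Proposition~\ref{extensionpsL1}) while its mean $\bar\gamma(F_\delta)$ can be made arbitrarily small by taking $\delta$ small: this is typically achieved through a truncation argument combined with a Doob-type $L^1$ maximal inequality for $s \mapsto \lambda^{-s}Z_s(f)$ (exploiting $|f| \leq \|f\|_{\mathcal{B}(h)} h$ and the $L\log L$ hypothesis in the spirit of the truncation in Section~\ref{sec:fam-of-mg}), together with right-continuity of $s \mapsto \lambda^{-s}S_s f$ at the origin. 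Once this interpolation estimate is in hand, letting $\delta$ shrink along a countable sequence closes the argument for any $f \in \mathcal{B}(h)$.
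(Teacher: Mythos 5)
Your reduction to the $\delta$-skeleton is exactly the paper's first step, and the verification of Assumption~\ref{ass:cvSt} and of the summability $\sum_n a(n\delta)/n<\infty$ for the skeleton goes through as you describe. The gap is in the interpolation step. In your decomposition
\begin{align*}
\lambda^{-t}Z_t(f)-\gamma(f)\lambda^{-n\delta}Z_{n\delta}(h)=\lambda^{-n\delta}\sum_{u\in\mathbb G_{n\delta}}\bigl(\lambda^{-s}Z_s^{(u)}(f)-\gamma(f)h(Z(u))\bigr),
\end{align*}
the term $\gamma(f)h(Z(u))$ is the wrong centering for small $s$: conditionally on $\mathcal F_{n\delta}$ the mean of $\lambda^{-s}Z_s^{(u)}(f)$ is $\lambda^{-s}S_sf(Z(u))$, which tends to $f(Z(u))$ as $s\to 0$, not to $\gamma(f)h(Z(u))$. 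Consequently any trajectory functional $F_\delta(Box(u))$ dominating $\sup_{s\in[0,\delta)}\vert\lambda^{-s}Z_s^{(u)}(f)-\gamma(f)h(Z(u))\vert$ satisfies $F_\delta(x,w)\geq \vert f(x)-\gamma(f)h(x)\vert$ (take $s=0$), hence $\overline{\gamma}(F_\delta)\geq\gamma(\vert f-\gamma(f)h\vert)$, which is strictly positive unless $f$ is proportional to $h$ $\gamma$-a.e. So $\overline{\gamma}(F_\delta)$ cannot be made small by shrinking $\delta$, and Proposition~\ref{extensionpsL1} applied to such an $F_\delta$ only yields $\limsup_t\vert\cdots\vert\leq W\,\overline{\gamma}(F_\delta)$, which does not close the argument; no truncation or Doob-type maximal inequality repairs this, since the obstruction already appears at $s=0$ for the bounded part of the functional. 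A secondary point: \eqref{eq:ergo} gives $S_\delta V\leq S_\delta V^\star\leq CV^\star$, not $S_\delta V\leq CV$, so Proposition~\ref{Propos-propag}~\ref{Propos-propag-i} is not directly applicable as you invoke it; the hypothesis \eqref{eq:xlogx-cont} for all $t\in(0,t_0]$ has to be propagated more carefully.

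The paper circumvents the two-sided estimate altogether. Lemma~\ref{lem:conttmp} proves only a one-sided bound $\liminf_{t\to\infty}X_t(f\mathbf{1}_A)\geq\gamma(f\mathbf{1}_A)W$ for $f\geq 0$ and $\gamma$-continuity sets $A$: writing $t=n\delta+s$, one keeps only those $u\in\mathbb G_{n\delta}$ whose entire progeny over $[n\delta,(n+1)\delta)$ stays in $\{y\in A:\ f(y)>f(Z(u))/(1+\epsilon)\}$ and survives to time $(n+1)\delta$; each such $u$ contributes at least $(1+\epsilon)^{-1}\lambda^{-\delta}f(Z(u))$, which is a \emph{fixed} (independent of $s$) box functional $F^\epsilon$ with $\lVert F^\epsilon\rVert_{\overline{\mathcal B}(h)}<\infty$, so Proposition~\ref{extensionpsL1} applies directly, and the loss $\gamma(f\cdot\xi^{\delta,\epsilon})\to\gamma(f\mathbf{1}_A)$ as $\delta\to 0$ then $\epsilon\to 0$ by right-continuity of the paths. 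Applying this to $h\mathbf{1}_A$ and $h\mathbf{1}_{A^c}$ and using the convergence of the total mass $X_t(h)$ sandwiches the limit of $X_t(h\mathbf{1}_A)$, and convergence for general $f\in\mathcal B(h)$ follows from the portmanteau theorem applied to the random measures $\mu_t=X_t(\cdot\,h)/X_t(h)$; the $L^1$ statement is obtained by the same one-sided scheme applied to negative parts. If you want to salvage your route, you would have to recenter each term at $\lambda^{-s}S_sf(Z(u))$ rather than at $\gamma(f)h(Z(u))$ and control the resulting fluctuation uniformly in $s$, which essentially forces you back into a one-sided or maximal-inequality argument of the paper's type.
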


We start by proving a.s.~convergence and define
for any $t\geq 0,$
$$X_t(f):={\lambda^{-t}} Z_t(f).$$ 
\begin{lemma}\label{lem:conttmp}
Let $f \in \mtcB^+(h)$ { be $\gamma$ a.e. lower semi-continuous} and  {$A\subset\mathcal{X}$ be measurable} such that $\gamma(\partial A)=0$. Then
\begin{align*}
        \liminf_{t\to \infty}X_t( f\mathbf{1}_A) \geq \gamma(f \mathbf{1}_A) W \quad \text{a.s}.
\end{align*}
\end{lemma}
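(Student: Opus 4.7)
My plan combines a Portmanteau-style approximation of $\mathbf{1}_A$ from below with the box construction of Proposition~\ref{extensionpsL1}. First I would reduce to $f \in \mtcB^+(h)$: the truncations $f_N := \min(f, Nh)$ lie in $\mtcB^+(h)$ and satisfy $f_N \uparrow f$ pointwise, so monotone convergence on $\gamma(\cdot)$ means it suffices to prove the conclusion for each $f_N$. Next, since $\gamma(\partial A)=0$, I would build a sequence of bounded lower semi-continuous functions $\varphi_k$ with $0\leq \varphi_k\leq \mathbf{1}_A$ and $\varphi_k \uparrow \mathbf{1}_{\mathring A}$ pointwise (for example $\varphi_k(x) := \min(1, k\, d(x,\mathring A^{c}))$ in the metric setting), so that $X_t(f\varphi_k)\leq X_t(f\mathbf{1}_A)$ and $\gamma(f\varphi_k)\uparrow \gamma(f\mathbf{1}_A)$. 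The problem thus reduces to showing
$$\liminf_{t\to\infty} X_t(f\varphi) \geq \gamma(f\varphi)\, W \quad \text{a.s.}$$
for a fixed $f\in \mtcB^+(h)$ and a fixed bounded lower semi-continuous $\varphi\geq 0$.

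For such $f\varphi$ I fix $\delta>0$ and use the discrete skeleton at times $n\delta$ together with the box $Box(u)=(Z(u),\Pi^{(u)})$, where $\Pi^{(u)}$ records the entire subtree of descendants of $u$ over the time window $[n\delta,(n+1)\delta)$. The continuous-time branching-Markov property makes such boxes satisfy \eqref{branchetendu}. Setting
$$F_\delta(Box(u)) := \inf_{s\in[0,\delta)} \lambda^{-s}\sum_{v\in\Pi^{(u)}_s} f(Z(v))\varphi(Z(v)),$$
evaluation at $s=0$ yields $F_\delta(Box(u))\leq f\varphi(Z(u)) \leq C h(Z(u))$ (using $f\in \mtcB^+(h)$ and $\varphi\leq 1$), hence $\|F_\delta\|_{\overline{\mtcB}(h)}<\infty$. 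For every $t\in[n\delta,(n+1)\delta)$ one has the deterministic bound
$$X_t(f\varphi) \;\geq\; \lambda^{-n\delta}\sum_{u\in\mathbb G_{n\delta}} F_\delta(Box(u)),$$
and Proposition~\ref{extensionpsL1} ensures that the right-hand side converges a.s.\ and in $L^1$ to $W\,\overline\gamma(F_\delta)$ as $n\to\infty$. Consequently $\liminf_{t\to\infty}X_t(f\varphi)\geq W\,\overline\gamma(F_\delta)$ almost surely.

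Finally I would let $\delta\downarrow 0$: by right-continuity of $(Z_s)_{s\geq 0}$, $Z_s$ converges weakly to $\delta_x$ as $s\to 0^+$ under $\mathbb P_{\delta_x}$, and the lower semi-continuity of $f\varphi$ together with a Portmanteau-type estimate gives $\liminf_{s\to 0^+}\lambda^{-s}Z_s(f\varphi)\geq f\varphi(x)$ for $\gamma$-a.e.\ $x$. Fatou's lemma, using the $\gamma$-integrable dominant $Ch(x)$, then yields $\liminf_{\delta\to 0}\overline\gamma(F_\delta)\geq \gamma(f\varphi)$, which is the desired inequality. Composing the three limits ($\delta\downarrow 0$, $\varphi_k\uparrow \mathbf{1}_A$, $N\to\infty$) completes the proof. \emph{Main obstacle.} The $\delta\downarrow 0$ step is the delicate point: the trajectory $s\mapsto \lambda^{-s}Z_s(f\varphi)$ is only right-continuous with possible downward jumps on $(0,\delta)$, and $f\varphi$ is only lower semi-continuous (not continuous), so one cannot hope for pointwise convergence of $F_\delta$ towards $f\varphi$; the Portmanteau-style lower bound combined with the lower semi-continuity of $h$ holding $\gamma$-a.e.\ (an assumption of Theorem~\ref{th:main-cont}) is essential to pass the limit inferior through the integral.
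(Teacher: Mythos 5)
Your proof is correct in substance and shares the paper's overall skeleton (discretize time with a window $\delta$, encode the window into $Box(u)$, apply Proposition~\ref{extensionpsL1}, then let $\delta\downarrow 0$), but the two technical devices you use are genuinely different from the paper's. Where you minorize $\mathbf{1}_A$ by continuous functions $\varphi_k\uparrow \mathbf{1}_{\mathring A}$ and take as box functional the pathwise infimum $F_\delta(Box(u))=\inf_{s\in[0,\delta)}\lambda^{-s}\sum_v f\varphi(Z(v))$, the paper instead keeps $\mathbf{1}_A$ and defines $F^\epsilon(Box(u))=(1+\epsilon)^{-1}f(Z(u))\mathbf{1}_{\mathcal A(u)}$, where $\mathcal A(u)$ is the event that $u$ has a descendant at time $(n+1)\delta$ and that every descendant's trait over the window stays in $A_f^\epsilon(Z(u))=\{y\in A:\,f(y)>f(Z(u))/(1+\epsilon)\}$; the $\delta\downarrow 0$, $\epsilon\downarrow 0$ limit is then carried by the convergence $\xi^{\delta,\epsilon}(x)=\mathbb P_{\delta_x}(\mathcal A(\varnothing))\to\mathbf{1}_A(x)$ $\gamma$-a.e., using right-continuity and $\gamma(\partial A)=0$, rather than by your Portmanteau-plus-Fatou argument. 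Your pathwise-infimum functional is arguably cleaner (monotone in $\delta$, so the $\delta\downarrow 0$ limit is a monotone convergence), and your preliminary truncation $f_N=\min(f,Nh)$ is a genuine improvement: Proposition~\ref{extensionpsL1} requires $\lVert F\rVert_{\overline{\mtcB}(h)}<\infty$, and the paper applies it to $F^\epsilon\leq f$ with $f$ only in $\mtcB^+(V)$ without comment, so your reduction to $\mtcB^+(h)$ plugs a small hole in the published argument.

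One caveat you should make explicit: your Portmanteau step $\liminf_{s\to 0^+}Z_s(f\varphi)\geq f\varphi(x)$ requires $f\varphi$ to be lower semi-continuous, hence $f$ itself lower semi-continuous ($\gamma$-a.e.), whereas the lemma is stated for merely measurable $f\in\mtcB^+(V)$. This is not a defect relative to the paper --- the paper's own limit $\xi^{\delta,\epsilon}(x)\to\mathbf{1}_A(x)$ implicitly needs the set $A_f^\epsilon(x)$ to contain a neighbourhood of $x$, which is the same lower semi-continuity requirement in disguise, and the lemma is only ever invoked with $f=h$, which Theorem~\ref{th:main-cont} assumes to be lower semi-continuous $\gamma$-a.e. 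But since your write-up leans on this property explicitly, you should state it as a standing hypothesis on $f$ (noting that $\min(f,Nh)$ and $f\varphi_k$ preserve it) rather than leaving it to surface only in the final limit.
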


\begin{proof}
For any $\epsilon>0, \  x\in \mtcX,  \ A\subset \mtcX$ and $f\in \mtcB^+(h)$, following \cite{asmussen1976strong}, we introduce the following subset of $A$
$$
A_f^\epsilon(x)= \left\{ y \in A \ \big \vert \  f(y) > \frac{1}{1+\epsilon} f(x) \right\}.$$
Next, for any $t\geq 0,$ we write $t = n\delta +s$ with $n\in \N, \delta>0$ and $ s\in [0, \delta)$. We denote respectively by $\mathcal{U}_{n\delta}$ the set of individuals alive at time $n\delta$, and $\mtcU_{n\delta+s}(u)$ the set of individuals alive at $n\delta+s$ issued from individual $u$ alive at $n\delta.$ By the branching property and the definition of subsets $A_f^{\epsilon}(\cdot),$ we have 
\begin{align}
    X_t(f\mathbf{1}_A) 
      &=\lambda^{-n\delta- s}\sum_{u \in \mtcU_{n\delta}} \sum_{v\in \mtcU_{n\delta+s}(u)} f\mathbf{1}_{A}(Z_{n\delta + s}(v))
      \geq \lambda^{-n\delta-\delta} \sum_{u \in \mtcU_{n\delta}} F^{\epsilon}(Box(u)),
      \label{minorcont}
\end{align}
where we define 
$$F^{\epsilon}(Box(u))= (1+\epsilon)^{-1} f(Z(u)) \mathbf{1}_{\mathcal{A}^{\delta,\epsilon}(u)}$$
and
$$\mathcal A^{\delta,\epsilon}(u)=
\left\{ \mtcU_{(n+1)\delta}(u)\ne\emptyset\right\} \cap  \left\{\forall w\in [0,\delta), \, \forall v\in \mtcU_{n\delta +w} (u),  Z_{n\delta +w}(v)  \in A_f^\epsilon(Z(u))  \right\}.$$
Now we can use the a.s. limit of Proposition 
\ref{extensionpsL1} for the right hand side.
We obtain for $\delta,\epsilon$ fixed,
\begin{align*}
\liminf_{t\to \infty}   X_t(f \mathbf{1}_A) 
&\geq \lambda^{-\delta}(1+\epsilon)^{-1}\gamma(f\cdot\xi^{\delta, \epsilon}) W \, \  \text{ a.s.},
\end{align*}
where 
$$\xi^{\delta, \epsilon}(x)=\mathbb  P_{\delta_x}(\mathcal A^{\delta,\epsilon}\varnothing)).$$


Since $f\in \mathcal B^+(h)$ and $\gamma(h)<\infty$, we have
$\gamma(f)<\infty$. We let $\delta\downarrow0$ and then
$\epsilon\downarrow0$. By Fatou's lemma, it is enough to check that,
for every fixed $\epsilon>0$ and for $\gamma$-almost all
$x\in\mathcal X$,
\begin{align*}
\liminf_{\delta\downarrow0} f(x)\xi^{\delta,\epsilon}(x)
\geq f(x)\mathbf 1_A(x).
\end{align*}

For that purpose, the case $x\in A^c$ is immediate, since the
right-hand side is zero. Since $\gamma(\partial A)=0$ and $f$ is
$\gamma$ a.e.~lower semicontinuous, it remains to consider the case
when $x$ belongs to the interior of $A$ and to the set of lower
semicontinuity of $f$. When $f(x)=0$, we still have, for all
$\delta,\epsilon$,
\(
f(x)\xi^{\delta,\epsilon}(x)=0=f(x)\mathbf 1_A(x),
\)
and the desired inequality is obvious. Therefore, we can focus on the
case when $f(x)>0$. Fix $\epsilon>0$.
Using that $x$ belongs to the interior  of $A$ and is a point of lower semi-continuity of $f$,  there exists $r_x>0$ such that
\[
\mathcal B(x,r_x)\subset A_f^\epsilon(x).
\]
We observe that
\[
\mathcal A^{\delta,\epsilon}(\varnothing)^c
\subseteq
\{\mathcal U_\delta=\emptyset\}
\;\cup\;
\Bigl\{
\exists\, w\in[0,\delta):\exists\, v\in\mathcal U_w(\varnothing)
\text{ such that }
Z_w(v)\notin\mathcal B(x,r_x)
\Bigr\}.
\]
Since $Z_0=\delta_x$ and the population process $(Z_t)_{t\ge0}$ is right-continuous for the narrow topology on finite measures, we have $Z_t\to\delta_x$ almost surely as $t\downarrow0$. In particular,
\[
\mathbb P_{\delta_x}(\mathcal U_\delta=\emptyset)\to0
\quad\text{ and }  \quad 
\mathbb P_{\delta_x}\Bigl(
\exists\, w\in[0,\delta):
Z_w(\mathcal B(x,r_x)^c)>0
\Bigr)
\to0
\quad\text{as }\delta\downarrow0.
\]
Hence for any $\varepsilon>0$,
\[
\mathbb P_{\delta_x}(\mathcal A^{\delta,\epsilon}(\varnothing)^c)\to0, \qquad \xi^{\delta,\epsilon}(x)\to1
\quad\text{as }\delta\downarrow0,
\]
for any  $x\in\mathring A$ with $f(x)>0$ and $f$ lower semi-continuous in  $x$. It ends the proof.
\end{proof}

\begin{proof}[Proof of a.s.\ convergence in Theorem~\ref{th:main-cont}]
We work on the event $\{W>0\}$, since otherwise the limit is trivial.
Let $A\subset\mathcal X$ be measurable with $\gamma(\partial A)=0$.
Applying Lemma~\ref{lem:conttmp} to both $A$ and $A^c$, we have on one hand,
\begin{align*}
    \liminf_{t\to \infty}  X_t(h\mathbf{1}_A) \geq \gamma(h \mathbf{1}_A) W \qquad \text{a.s.},
\end{align*}
and on the other hand,
\begin{align*}
\limsup_{t\to \infty} X_t(h\mathbf{1}_A)
&= \limsup_{t\to \infty} X_t(h) -X_t(h\mathbf{1}_{A^c})\\
&\leq \limsup_{t\to \infty}  X_t(h) -\liminf_{t\to \infty} X_t(h\mathbf{1}_{A^c})\leq 
\gamma(h)W-\gamma(h{\bf 1}_{A^c})W=   \gamma(h\mathbf{1}_A)W,
\end{align*}
 which then ensures
$$\lim_{t\to \infty} X_t(h\mathbf{1}_A) = \gamma(h \mathbf{1}_A) W \qquad  \text{a.s.}$$
The family of random measures
$(\mu_t)_{t\geq 0}$ defined for $f$ {bounded} continuous $\gamma$-a.e.\ by 
$$
\mu_t(f) :=
\frac{X_t(fh)}{X_t(h)},
$$
thus verifies that $\lim_{t \to \infty} \mu_t(A)= \gamma^h(A) : = \int_A h d\gamma $ a.s.~for each continuity set of $\gamma^h$.
Using \cite[Theorem 2.3]{billingsley2013convergence}, we deduce that $\mu_t$  converges weakly towards $\gamma^h$.
Since $\mathcal X$ is a separable metric space, convergence on continuity sets implies weak convergence (see \cite[Theorem~2.3]{billingsley2013convergence}). Therefore,
\[
\mu_t \Rightarrow \gamma^h
\qquad\text{a.s.}
\]
If we denote by $D_f$ the set of discontinuities of $f$, by assumption, $\gamma(D_f)=0$, hence also $\gamma^h(D_f)=0$.
By the Portmanteau theorem (see \cite[Section~2]{billingsley2013convergence}),
\[
\int_{\mathcal X} f\, d\mu_t
\longrightarrow
\int_{\mathcal X}  f\, d\gamma^h
=
\gamma(fh)
\qquad\text{a.s.}
\]
Since $X_t(h)\to W$ a.s., we obtain
\[
X_t(fh)
=
X_t(h)\int_{\mathcal X}  f\,d\mu_t
\xrightarrow[t\to\infty]{}
\gamma(fh)\,W
\qquad\text{a.s.}
\]
This ends the proof of a.s. convergence. 
\end{proof}

We now prove $L^1$ convergence of the renormalized empirical measure by considering its positive and negative parts as follows. Recall that for any real-valued process or quantity $X:$
\begin{align*}
    X_{+}:=\max(X;0);\qquad
    X_{-}:=-\min(X;0).
\end{align*}
\begin{proof}[Proof of  $L^1$ convergence in Theorem~\ref{th:main-cont}]
We can invoke Scheff\'e's lemma together with convergence of the semigroup. We can also adapt easily the proof of the a.s. convergence as follows. 
\begin{enumerate}
\item First, observing that $\gamma(\partial \mathcal{X})=0$, we adapt the proof of Lemma \ref{lem:conttmp} and control the negative part of $X_t( f) - \gamma(f) W$ in $L^1$. We prove that for any $f\in \mathcal B^+(h)$ continuous $\gamma$-a.e.,  we have
\begin{align*}
        \lim_{t\to \infty}\E_{\delta_x}\left(\left(X_t( f) - \gamma(f) W\right)_-\right)=0.
\end{align*}
Indeed, using the lower bound \eqref{minorcont} and the triangular inequality yields
\begin{align*}
&\E_{\delta_x}\left(\left(X_{t} ( f) - \gamma(f) W\right)_-\right)\\
    & \qquad \qquad \leq
    \lambda^{-\delta} (1+\epsilon)^{-1} \E_{\delta_x}\left(\left(\lambda^{-n\delta}\sum_{u \in \mathcal{U}_{n\delta}}  \, F^{\epsilon} \, (Box(u))- \gamma(f \cdot\xi^{\delta,\epsilon}) W\right)_-\right)\\
    &\qquad \qquad  \qquad +\left(\gamma(f \cdot\xi^{\delta,\epsilon})-\gamma(f \mathbf{1}_A)\right)_- \E_{\delta_x}(W).
\end{align*}
Now we can use the $L^1$ limit of Proposition 
\ref{extensionpsL1} to make the first term of the right hand side go to zero.
We conclude by letting $\delta$ and then $\epsilon$ go to zero, so that $\gamma(f \cdot\xi^{\delta,\epsilon})$ goes to $\gamma(f)$.  Recalling the arguments at end of  the proof of Lemma \ref{lem:conttmp}, it only requires that $f$ is $\gamma$ a.e. lower semi continuous.
\item By Assumption \ref{eq:ergo},
$$\lim_{t\to \infty}\E_{\delta_x}(X_t(f))= \lim_{t\to \infty} \lambda^{-t} S_tf(x)=h(x)\gamma(f)$$ for any $f\in \mathcal{B}^+(h)$. 
\item Finally, taking expectation in the following expression   
\begin{align*}
  \left(X_t(f)-\gamma(f)W \right)_+=\left(X_t(f)-\gamma(f)W \right) -\left(X_t(f)-\gamma(f) W \right)_- 
\end{align*}
shows that its left-hand side converges to $0$ in $L^1$. 
We conclude by decomposing $X_t(f)-\gamma(f)W$  with its positive and negative parts.
\end{enumerate}
This completes the proof of the $L^1$ convergence of $X_t(f)$ for any $f\in \mathcal{B}^+(h)$ which is $\gamma$-a.e.\ lower semi-continuous.  .
\end{proof}

\subsection{Construction and preliminaries  for applications}
\label{se:lgolfacile}

Let us detail here a general way to prove both the well-posedness of the branching process (i.e.~non-explosion of the dynamics between branching events and non-explosion of the number of individuals) and the $L\log L$ condition based on infinitesimal drift conditions.

To that end, we place ourselves within a similar and general framework to \cite{cloez2017limit, marguet19}: between branching events, individuals possess a trait which evolves according to some Markovian dynamics and, depending on this trait, they branch out giving birth to a random number of descendants with new traits.

More precisely, let $\mathcal{X}$ be a locally compact and separable metric space (with its Borel $\sigma$-field). Consider a family of increasing (for the inclusion order) open sets $(O_n)$ of $\mathcal{X}$, satisfying $$\bigcup\limits_{n\geq 0} O_n=\mathcal{X}.$$ 

\paragraph{The trait dynamics.}
\label{se:trait-dyn}
Let $(Y_t)_{t\geq 0}$ be a time-homogeneous Markov Borel right process on $\mathcal{X}$. The latter will model the underlying dynamics between branching events. Let $\zeta \in \mathbb{R} \cup \{+\infty \}$ be the almost-sure limit, when $m \in \N$ tends to infinity, of the sequence of hitting times $T_m$ of $O_m^c$. If $\zeta = +\infty$ then we say that the process $(Y_t)_{t\geq 0}$ is non-explosive (or regular). Otherwise, we consider a particular abstract cemetery point $\Delta \notin \mathcal{X}$, and define $Y_t =\Delta$ for any $t\geq \zeta$.  Similarly to \cite{meyn1993stability} and references therein, we assume that the killed process $\{ Y_t: 0\leq t <\zeta\} $ is a Borel right process.
In any case, let $(Y^m_t)_{t\geq 0}$ be the process defined by $Y^m_t=Y_t$ for $t<T_m$ and $Y_t^m= \Delta$, for $t\geq T_m$. As stated in \cite{meyn1993stability}, this truncated process is shown in \cite[Theorem 12.23]{sharpe1988general} to be a non-explosive right process once $Y_t$ is assumed to be a non-explosive right process.

Let $(G,\mathcal{D}(G))$ be the extended generator of $(Y_t)_{t\geq 0}$. More precisely, $\mathcal{D}(G)$ is the set of measurable functions $f:\mathcal{X} \to \mathbb{R}$ for which  there exists a measurable function $\varphi:\mathcal{X} \to \mathbb{R}$ such that
$$
\mathbb{E}_x\left[|f(Y_t)| \right] < + \infty, \quad \int_0^t \mathbb{E}_x\left[|\varphi(Y_s)| \right] ds < + \infty,
$$
and
$$
\mathbb{E}_x\left[f(Y_t) \right] =f(x) + \int_0^t \mathbb{E}_x\left[\varphi(Y_s) \right] ds.
$$
In this case we write $\varphi=Gf$. In the last expressions, $\mathbb{E}_x$ denotes, as usual, the expectation conditioned on $Y_0=x$. This definition ensures martingale properties;
see \cite{meyn1993stability} or \cite[Chapter 1, Section 5]{ethier2009markov} for details. Similarly, we write $(G_m, \mathcal D(G_m))$ for the extended generator of $(Y^m_t)_{t\geq 0}$.



We will treat the following examples in the forthcoming applications.
We will first consider branching diffusion models, where the trait dynamic is a diffusion. In this case, $\mathcal{X}=D$ is an  open connected subset of $\R^d$,  $d\geq 1$,  $O_n= \{ x\in D \ | \ \textsc{d}(x,\partial D) > 1/n \}$ where we write $\textsc{d}$ for  the Euclidian distance in $\R^d$ and the associated distance  of a point to  a set, $\mathcal{D}(G)$ will be the set of $C^2$ functions (bounded with bounded derivative) and

$$
G_n f(x) = \sum_{i=1}^d b_i(x) \partial_{x_i} f(x) + \frac{1}{2} \sum_{i=1}^d \sum_{j=1}^d (\sigma \cdot \sigma^T)_{i,j}(x) \partial_{x_i, x_j} f(x), 
$$
for $b,\sigma$ described hereafter. We will assume $x\in O_n$ and $f\in \mathcal{D}(G)$.  \\
As a second example, we will consider 
the house-of-cards model, without any motion for the traits (i.e.~jumps will occur at branching events). In this case,  $\mathcal{X}=[0,1]$, $O_n=[0,1]$ for all $n\geq 1$, $$Gf=0$$  and $\mathcal{D}(G)$ is the set of bounded functions.\\
Finally, we will apply our results to growth-fragmentation models, where the trait follows the deterministic ODE $\dot y_t=g(y_t)$.  There, $\mathcal{X}=\mathbb{R}_+$, $O_n=(1/n,n)$ for all $n\geq 1$, 
$$
G_n f= g(x) f'(x)$$ and $\mathcal{D}(G)$ is the set of $C^1$ functions.


\paragraph{The branching mechanism.}
\label{se:banchingexe}
Instead of describing the entire population, let us describe here how the first generation of individuals is produced from an initial individual with trait $x\in \mathcal{X}$. The rest of the dynamics is then produced iteratively : each offspring will evolve independently similarly to the initial individual and, conditionally on its trait at birth, independently of the initial individual.

Let us begin by defining the branching time. Let $B$ be a locally bounded function  on $\mathcal{X}$ representing the branching rate.

Let $(Y_t)_{t\geq 0}$ be defined as in Section~\ref{se:trait-dyn}, with $Y_0=x$, and $E$ be an exponentially distributed random variable, with mean $1$. The first branching time $\beta_\varnothing$ is defined as follows : if for every $m\in \mathbb{N},$
$$
\int_0^{T_m} B(Y_s) ds < E,
$$
then we set $\beta_\varnothing=\zeta$ and, else we set
$$
\beta_\varnothing= \inf\left\{ t\geq 0 \ | \ \int_0^{t} B(Y_s) ds \geq E \right\}.
$$

We then set $X_\varnothing(t) = Y(t)$ and $Z(t) = \delta_{X_\varnothing(t)}$ for every $t<\beta_\varnothing$.

Let us now model the offspring. At time $\beta_\varnothing$, the first individual is removed and replaced by a random number of new individuals. More precisely, let $x \mapsto (q_k(x))_{k\in \N}$ be a measurable function from $\mathcal{X}\cup \{\partial \}$ to the set of  probabilities over $\mathbb{N}$ and, for any $k\in \mathbb{N}$,  let $Q^k : x \mapsto Q^k(x, d x_1, \dots, dx_k) $ be a measurable function from $\mathcal{X} \cup \{\partial\} $ to probabilities on $(\mathcal{X}\cup \{\partial\} )^k$.

We can now define $Z(\beta_\varnothing)$ on the event $\beta_\varnothing<\infty$. Let $\nu_\varnothing$ be a random variable distributed such that 
$$
\forall k \geq 0, \ {\bf 1}_{ \beta_\varnothing<\infty}\mathbb{P}\left( \nu_\varnothing =k \ | \ (X_\varnothing(t))_{t< \beta_\varnothing} \right) = {\bf 1}_{ \beta_\varnothing<\infty}\, q_k(X_\varnothing(\beta_\varnothing)).
$$
The variable $X_\varnothing(\beta_\varnothing)$ corresponds to $Y(\beta_\varnothing)$ and is equal to $\partial$ in the case $\beta_\varnothing=\zeta$. Finally, let $(X_1(\beta_\varnothing),...,X_{\nu_\varnothing}(\beta_\varnothing))$ be  a random vector whose law, conditionally on $\{(X_\varnothing(t))_{t< \beta_\varnothing}, \nu_\varnothing \}$, is given by $Q^{\nu_\varnothing} (X_\varnothing(\beta_\varnothing))$. We set $Z(\beta_\varnothing)=\sum_{k=1}^{\nu_\varnothing} \delta_{X_k(\beta_\varnothing)}$, where $Z(\beta_\varnothing)=0$ if $(\nu_\varnothing)=0$.

Finally, starting from the stopping time $\beta_\varnothing$, the dynamics of the measure $Z$ is described by the sum of particles $\nu_\varnothing$ evolving and branching independently like the first.

This dynamics is well defined as long as the number of jumps is not infinite in finite time; we will describe sufficient conditions for this to hold in the next section.


\paragraph{Non-explosion and generator.}
From {now on}, we fix some measurable function $V:\mathcal{X} \to [1,\infty)$ which belongs to $\mathcal{D}(G)$. For any $m\geq 0$, we set $$\mathcal{O}_m=\{ x\in \mathcal X : V(x)\leq m\} $$ and will assume that for every $m\geq 0$,
\begin{equation}
\label{eq:Blocbound}
\sup_{x \in \mathcal{O}_m} B(x) < + \infty.
\end{equation}
We refer to \cite{hairer2011yet} for examples. 
 In the case where particles never reach
 $\partial$, we will consider $O_n=\mathcal{O}_n$.

Under Assumption~\eqref{eq:Blocbound}, as long as the process contains a bounded number of particles belonging to one of the sets $\mathcal{O}_m$, then the number of jumps can be bounded by coupling it with {that} of a mono-type branching process.

Here, {proving non-explosion  consists of}
 showing that
$$ \lim_{n \to \infty} \mathcal{T}_n=+\infty \quad \text{a.s.}, \quad \text{where }
\mathcal{T}_n = \inf\{ t\geq 0 \ | \ Z_t(\mathbf{1})\geq n \text{ or } Z_t(\mathbf{1}_{\mathcal{O}_n^c})>0 \},
$$
by using Lyapunov functions for the generator \cite{meyn1993stability}. This will also enable to exhibit sufficient conditions for the   $L \log L$ moment condition.
 Let $Z^n$ be the process killed at time $\mathcal{T}_n $. Namely $Z^n(t)=Z(t)$ for $t<\mathcal{T}_n$ and  $Z^n(t)=\Delta$, i.e.~some cemetery point (as before) for $t\geq \mathcal{T}_n$. For sake of notation, we consider $\partial$ as any abstract positive measure. 
We can describe the generator of the Markov process $(Z^n_t)_{t\geq 0}$ for functions $F_f : \mu \mapsto F(\mu(f))$ on finite {point measures}, where $F$ is $C^1(\mathbb R_+,\mathbb R)$ and $f\in \mathcal{D}(G)$. Indeed, let us consider such a functional $F_f$ and let $\mu$ be a point measure (with $m\leq n$ atoms).
Between jumps, the individuals evolve independently with generator $G_n$. A simple application of the chain rule yields 
the first term in formula \eqref{eq:extgen} below. For the jump part, conditionally on $Z_{t-}^n=\mu$, an individual at trait $x$ branches at rate $B(x)$ and is replaced by $k$ offspring with traits distributed according to $Q^k(x,\cdot)$. Hence the generator contribution is obtained by integrating 
\[
F_f\!\left(\mu-\delta_x+\sum_{i=1}^k\delta_{x_i}\right)-F_f(\mu)
\]
against the branching rate and offspring law. Evaluating this expression yields exactly the second and third terms of \eqref{eq:extgen} below, including the truncation contribution when the population size exceeds $n$. This leads to
\begin{align}
&A_n F_f(\mu) = \mu(G_n f) F'(\mu(f))\nonumber \\
&+ \int_{\mathcal X} B(x) \sum_{k=0}^{n-m} q_k(x) \left(\int_{\mathcal X^k} F\left(\mu(f) -f(x) + \sum_{i=1}^k f(x_i) \right) Q^k(x,dx_1,...,dx_k) - F(\mu(f)) \right) \mu(dx)\nonumber\\
&+ \int_{\mathcal X} B(x) \sum_{k\geq n-m+1} q_k(x) \left( F\left(\Delta(f) \right)  - F(\mu(f)) \right) \mu(dx).
\label{eq:extgen}
\end{align}
We refer to e.g.  \cite{roelly1990construction}for details of the proof.\\
In particular, letting $f=V, F = \textsc{Id}$ and $\mu = \delta_x$ (hence $m=1$), we can rewrite \eqref{eq:extgen}  
\begin{align*}
    A_n\textsc{Id}_V (\delta_x) &=G_n V(x) +  B(x) \sum_{k=0}^{n - 1} q_k(x) \int_{\mathcal X^k}\left(-V(x) + \sum_{i=1}^k V(x_i) \right) Q^k(x,dx_1,...,dx_k)\\
    &+ B(x) \sum_{k \geq n}q_k(x)(V(\Delta) - V(x))\\
  &\leq G_n V(x) +  B(x) \sum_{k=0}^{n-1} q_k(x) \int_{\mathcal X^k}\left(-V(x) + \sum_{i=1}^k V(x_i) \right) Q^k(x,dx_1,...,dx_k),
\end{align*}
 where for the last inequality we used $V(\Delta) = 0 $ and $B(x),q_k(x), V(x)\geq 0.$

Let us now give {sufficient conditions for} non-explosion and to satisfy the $L \log L$ condition. 
We write 
 $\mathcal{P}_n$ for the set of {point measures} $\mu$ such that $\mu(O_n^c)=0$ and $\mu(\mathbf{1}) \leq n.$
\begin{lemma}
\label{lem:nonexplosion}
i) If there exists a 
function $V$ such that,
$$
\sup_{n\geq 0} \sup_{x\in O_n} \frac{A_n\textsc{Id}_V (\delta_x)}{V(x)} <+\infty,
$$
then there is no explosion : $\lim_{n \to \infty} \mathcal{T}_n=+\infty$. \\

ii) If moreover, a function $F$ satisfies
$\inf_{y>0} F(y)/(y\log^\star(y))>0$ and
\begin{align*}
&\sup_{n\geq 0} \sup_{\mu \in \mathcal{P}_n} \frac{A_n F_V(\mu)}{F_V(\mu)} <+ \infty,
\end{align*} then for any $t\geq 0$,
$$\sup_{x\in \mathcal X} \frac{\E_x( Z_t(V)\log^{\star}(Z_t(V))}{F(V(x))}<\infty .$$
\end{lemma}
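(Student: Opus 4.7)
The plan is to apply the Foster--Lyapunov approach of \cite{meyn1993stability} on the space of finite punctual measures, with the test function $\textsc{Id}_V : \mu \mapsto \mu(V)$ for part (i) and $F_V : \mu \mapsto F(\mu(V))$ for part (ii). In both cases, the hypothesis provides a drift inequality of the form $A_n W \leq C W$ with a constant $C$ independent of $n$, and the conclusion will follow from Dynkin's formula combined with Gronwall's lemma.

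For (i), I would first invoke Dynkin's formula for the extended generator $A_n$ applied to $\textsc{Id}_V$ on the process $(Z^n_t)$, stopped at $\mathcal{T}_n$. Since $A_n \textsc{Id}_V$ is linear in $\mu$ and the cemetery jump only contributes a non-positive term (because $\textsc{Id}_V(\Delta) - \mu(V) = -\mu(V) \leq 0$), the hypothesis yields $A_n \textsc{Id}_V(\mu) \leq C \mu(V)$ for every $\mu \in \mathcal{P}_n$. Gronwall's inequality then gives $\E_{\delta_x}[Z_{t \wedge \mathcal{T}_n}(V)] \leq e^{Ct} V(x)$, uniformly in $n$. The final step is to observe that on the event $\{\mathcal{T}_n \leq t\}$ one has $Z_{\mathcal{T}_n}(V) \geq n$, since either the number of particles reaches $n$ (each contributing $V\geq 1$) or some particle reaches the region $\mathcal{O}_n^c = \{V>n\}$. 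A Markov-type inequality then produces
\begin{equation*}
\mathbb{P}_{\delta_x}(\mathcal{T}_n \leq t) \leq \frac{e^{Ct} V(x)}{n} \xrightarrow[n \to \infty]{} 0,
\end{equation*}
and since $(\mathcal{T}_n)$ is non-decreasing, $\mathcal{T}_n \to \infty$ almost surely.

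For (ii), I would replicate the argument with the test function $F_V$ in place of $\textsc{Id}_V$. By the reinforced drift hypothesis, $A_n F_V(\mu) \leq C F_V(\mu)$ uniformly in $n$ and $\mu \in \mathcal{P}_n$, so Dynkin's formula and Gronwall's lemma give $\E_{\delta_x}[F(Z_{t \wedge \mathcal{T}_n}(V))] \leq e^{Ct} F(V(x))$. Using non-explosion established in (i) together with Fatou's lemma as $n \to \infty$, I obtain $\E_{\delta_x}[F(Z_t(V))] \leq e^{Ct} F(V(x))$. The lower bound $F(y) \geq c' \, y \log^\star(y)$ provided by the hypothesis on $F$ then yields
\begin{equation*}
\sup_{x \in \mathcal{X}} \frac{\E_{\delta_x}[Z_t(V) \log^\star(Z_t(V))]}{F(V(x))} \leq \frac{e^{Ct}}{c'} < \infty.
\end{equation*}

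The main obstacle will be the rigorous justification of Dynkin's formula at the level of the extended generator. One must verify that $\textsc{Id}_V$ and $F_V$ genuinely belong to the domain of $A_n$ on $(Z^n_t)$ before $\mathcal{T}_n$, which typically requires a localisation argument: apply Dynkin along an auxiliary sequence of stopping times that make the associated Dynkin martingale genuinely integrable, and then remove the localisation using the uniform bound obtained from Gronwall. A secondary subtlety is the interpretation of $F_V$ at the cemetery $\Delta$; adopting the convention that the cemetery contribution is non-positive (which is consistent with the form of the generator displayed in the paper) preserves the drift inequality and allows the same scheme to run through.
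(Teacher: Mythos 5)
Your proposal is correct and follows essentially the same route as the paper: the paper verifies that $\textsc{Id}_V$ (resp.\ $F_V$) satisfies the drift condition [CD0] of Meyn and Tweedie and then simply cites their Theorem 2.1 (i) and (iii), whose proof is exactly the Dynkin--Gronwall--localisation scheme you spell out, together with the Markov-inequality step on $\{\mathcal{T}_n\leq t\}$ using $Z_{\mathcal{T}_n}(V)\geq n$ (valid since $V\geq 1$ and $\mathcal{O}_n^c=\{V>n\}$). The only difference is that you unpack the cited theorem rather than invoking it.
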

Observe that Part ii) yields   $L \log L$ condition \eqref{eq:xlogx-cont}  with $V^\star(x)=F_V(\delta_x)= F(V(x))$. We also note that non explosion and the fact that the trait process $Y$ is Borel right guarantees that the branching process is c\`adl\`ag for the narrow topology.

\begin{proof}
If we consider $F :x \mapsto x$ (with $F_V(\Delta)=0$) then, the function $F_V$ verifies \cite[CD0]{meyn1993stability}. Consequently \cite[Theorem 2.1 (i)]{meyn1993stability} gives the non-explosion i). Under the second assumption $ii)$, the function $F_V$ verifies \cite[CD0]{meyn1993stability},  and \cite[Theorem 2.1 (iii)]{meyn1993stability} gives the result.
\end{proof}

In particular, when $V\equiv 1$, $\lim_{n \to \infty} \mathcal{T}_n=+\infty$ a.s. as soon as
$$
    \sup_{x\in \mathcal{X}} \left(B(x) \sum_{k\geq 0} q_k(x)(k-1)\right) <+\infty,
$$
because $G_n \mathbf{1} \leq 0$. Moreover, for the same reason, {by assuming $\mu = \delta_x, V=1, F = \textsc{Id}$} the $L \log L$ condition \eqref{eq:xlogx-cont} holds with $V^\star\equiv 1$ as soon as 

\begin{equation}
\label{eq:lgolfacile}
\sup_{x\in \mathcal{X}} \left( B(x) \sum_{k\geq 1} k \log(k) q_k(x) \right) <+ \infty.
\end{equation}
This condition is thus sufficient to construct the branching process for any time and apply Theorem \ref{th:main-cont}.

\subsection{Application to branching elliptic diffusion}\label{sec:Bdiffusion}
We  place ourselves in the framework of absorbed diffusion processes with killing as per \cite[Section 4.4]{champagnat2023general} and obtain here  conditions for the renormalized branching process to converge. 
\paragraph{Description of the branching Markov process.}
Let $\mathcal{X}=D$ be an  open connected subset of $\R^d, \ d\geq 1$. We consider a branching Markov process where each particle $u \in \mathcal{U}$ is characterised at time $t\geq 0$ by a trait $X_t^u \in D$. Between branching events, the dynamics of the trait is described by a process $(X_t)_{t\geq 0}$, solution to the following stochastic differential equation (SDE)
\begin{align}
    dX_t = b(X_t)dt+\sigma(X_t) dB_t,\label{eq:SDE-CV}
\end{align}
where $(B_t)_{t\geq 0}$ is a standard $r$-dimensional Brownian motion, $b:D\to \R^d$ and $\sigma: D\to \R^{d\times r}$ are both locally H\"{o}lder functions and $\sigma$ is locally uniformly elliptic in $D,$ i.e.
\begin{align*}
    \forall K \subset D \ \text{ compact, } \ \inf_{x\in K} \inf_{s \in \R^d \setminus \{0\}} \frac{s^T \sigma(x)\sigma^T(x)s}{\vert s \lvert ^2} >0,
\end{align*} 
and $\lvert \cdot \lvert $ is the standard Euclidean norm on $\R^d.$
The diffusion process $(X_t)_{t\geq 0}$ is assumed to be immediately absorbed at a cemetery point $\partial \notin D$ at the first exit point $\tau_{\text{exit}}$ of $D$ defined as
\begin{align*}
    \tau_{\text{exit}}=\inf \{t\geq 0: X_t \in D^c\}.
\end{align*}
Note here that \cite{chen2017law} have already studied the case of reversible processes, which ensures symmetry of the semigroup and allows reliance on other techniques.
As mentioned in \cite{champagnat2023general}, the proof of existence and the construction of such a diffusion process necessitate some work since the coefficients $b(\cdot)$ and $\sigma(\cdot)$ are only defined on the open set $D$ but not at the boundary point $\delta.$ We refer the reader to \cite[Section 12.1]{champagnat2023general} for the construction of a process $(X_t)_{t\geq 0}$ as a weak solution to \eqref{eq:SDE-CV} up to the first exit time 
\begin{align*}
    \tau_{K_k^c}:=\inf \{t\in \R_+: X_t \in K_k^c \}
\end{align*} 
of each compact subset $K_k \subset D$ defined for any $k\in \N^*$ as
\begin{align*}
    K_k:=\{x \in D: \vert x\vert \leq k \text{ and } \textsc{d}(x,D^c)\geq 1/k \},
\end{align*}
where $\textsc{d}$ is the Euclidean distance between a point and a set. 
In this case, $\tau_{\text{exit}}=\sup_{k\geq 1}\tau_{K_k^c}$. Let us now describe the branching events. Any particle $u\in \mathcal{U}$ with trait $X_t^u=x$ branches at a rate $B(x)$ and produces $k\in \N$ offspring with probability $p_k(x)$. These offspring have the same trait $x$. We denote the mean number of offspring $m(x):=\sum_{k=1}^\infty kp_k(x)$ for any $x\in D.$ Letting $V_t$ be the set of particles alive at time $t\geq 0$, the structured population is described by the branching process $  Z_t =\sum_{u\in V_t} \delta_{X_t^u}$.
We use condition \eqref{eq:lgolfacile}  
for non explosion and $L\log L$ condition: 
\begin{assumption} [related to the branching events]\label{assu:existence_diff} Assume that for every $k\geq 0$, $x\mapsto p_k(x)$ and $x\mapsto B(x)$ are continuous over $D$ and
\begin{align*} 
\sup_{x\in \mathcal{X}} B(x) \sum_{k\geq 1} k \log(k) p_k(x) <+ \infty.
\end{align*}
\end{assumption}
In particular, with this assumption,  we can define
$$
   \overline{B}:=\sup_{x \in D}\Big( B(x)(m(x)-1) \Big) <\infty.
$$

\paragraph{Asymptotic behaviour.}
 Let us introduce the measurable locally bounded function $\kappa$ on $D$ given by
\begin{align}
        \kappa(x)&:=\overline{B}-B(x)(m(x)-1) \geq 0.
\end{align}
We consider the diffusion process $(X_t)$ to be the weak solution to \eqref{eq:SDE-CV} as defined in the previous section. The corresponding process killed at rate $\kappa$ is denoted $X^\kappa$. More precisely, for an independent exponential random variable $\zeta$ with parameter 1, we set
\begin{align*}
    \tau_\partial=\tau_{\text{exit}}\wedge \inf  \left \{t\geq 0, \int_0^t \kappa(X_s)ds >\zeta \right\},
\end{align*}
and define $X^\kappa_t= X_t$ for $t\leq \tau_\partial$ and $X^\kappa_t=\partial$ for $t\geq \tau_\partial$.
 We also introduce for some $x\in D$ and open ball {$\mtcB$ such that $\overline{\mtcB} \subset D,$} the constant
\begin{align*}
    \lambda_0:={\inf} \{\ell>0, \ \text{s.t.} \ \liminf_{t\to \infty} e^{\ell t}\mathbb{P}_x(X_t \in {\mtcB})>0 \}.
\end{align*}
Here $\mathbb{P}_x$ denotes the probability conditioned on $X_0=x$. It is proven in \cite[Section 12.2]{champagnat2023general} that in the above context $\lambda_0<\infty$ and $\lambda_0$ depends neither on $x$ nor on {$\mtcB$}.
{We also denote by
\begin{align*}
    \tau_{K_k} := \inf \{ t\geq 0: \ X_t \in K_k\},
\end{align*}
the first entrance time of the process into $K_k.$}
\begin{assumption}[related to the diffusion]
\label{assu:diff-sg}
  There exists a subset $D_0 \subsetneq D$ and a time $s_1 >0$ such that 
    \begin{align*}
        \inf_{x\in D\setminus D_0} \kappa(x)>\lambda_0, \qquad \sup_{x\in D_0} \mathbb{P}_x(s_1< \tau_\partial \wedge \tau_{K_k})\longrightarrow 0, \quad \text{ as } k\to \infty.
    \end{align*} 
\end{assumption}
As specified in \cite[Remark 11]{champagnat2023general}, a simple and sufficient condition is given by $\lim_{k\to \infty} \inf_{x\in D\setminus K_k} \kappa(x)=°\infty$.
We focus on the supercritical regime when this rate is positive. 
We  have then the following result  for  branching diffusions.
\begin{theorem}
Under Assumptions~\ref{assu:existence_diff} and \ref{assu:diff-sg}, there exist  $\lambda \in \mathbb R$ and a probability measure
$\gamma$ on $\mathcal X$ and   a function $h : \mathcal X\rightarrow (0,\infty)$   measurable and lower semi-continuous  $\gamma$-almost everywhere such that for any $t\geq 0$,
\begin{equation}
\label{eigencont-bis}
 \gamma S_t ={\lambda^t} \gamma, \quad S_t h={\lambda^t} h, \quad  \gamma(h)=1.
 \end{equation}
If additionally  $\lambda>1$, then for any $x\in \mathcal X$,  the martingale satisfies  
\begin{align*}
\E_{\delta_x}(W)=h(x)\quad \text{ and } \quad \lim_{t\rightarrow \infty}  {\lambda^{-t}} Z_t(h)= W \quad  \mathbb P_{\delta_x} \text{ a.s. and in } L^1.
\end{align*}
Moreover, the following convergence holds for any bounded measurable function $f:\mathcal X\to\mathbb R$ whose set of discontinuities is $\gamma$-negligible:
\begin{align*}
\lim_{t\rightarrow \infty} \lambda^{-t} Z_t(fh)
=
\gamma(fh)\, W
\quad \mathbb P_{\delta_x}\text{-a.s. and in }L^1.
\end{align*}

\end{theorem}

\begin{proof}
Let us define the sub-Markovian semigroup $(P_t)_{t\geq 0}$ as
    \begin{align*}
        P_t=e^{-\overline{B}t}S_t,
    \end{align*}
which, by using a Feynman-Kac formula or spinal decomposition \cite{englander2004local,englander2014spatial}, can be seen to correspond to a (non-branching) killed diffusion process. We now justify that there exists a function $V$, a triplet $(\lambda_1, h,\gamma)$, with $\gamma(V)<+\infty$, $0< h\leq V$, and  $C,\rho>0$ such that for any $t\geq 0$,
\begin{align}
\label{eq:ergoPt}
    \sup_{|f|\leq V} \left| e^{-\lambda_1 t} P_t f(x)  - h(x) \gamma(f) \right| \leq C e^{-\rho t} V(x).
\end{align}
This will provide the eigenelements of \eqref{eigencont-bis} and allow us to check the assumptions of our main result.  More precisely,  
all arguments to prove \eqref{eq:ergoPt} are  contained  in \cite{champagnat2023general} and we  briefly justify why \eqref{eq:ergoPt} holds.    Assumption~\ref{assu:diff-sg} means that $X^\kappa$ verifies the assumptions of \cite[Theorem 4.5]{champagnat2023general}. From the analysis of their proofs, we deduce that the latter implies that $X^\kappa$ also verifies \cite[Assumption (F)]{champagnat2023general} with some $\psi_2\leq \psi_1$ and $\psi_1=1$. Consequently, it verifies \cite[Assumption (E)]{champagnat2023general} at discrete-times with $\varphi_1=1$. We can then observe thanks to \cite[Corollary 2.4]{champagnat2023general} that the latter result implies that \eqref{eq:ergoPt} holds at discrete-times with $V=1$. To be convinced that the result also applies in continuous-time, we can consult \cite{bansaye2022non}, given that \cite[Assumption (E)]{champagnat2023general} implies \cite[Assumption A]{bansaye2022non}, and \cite[Theorem 2.1]{bansaye2022non} implies \eqref{eq:ergoPt}. 
Recalling that Assumption \ref{assu:existence_diff} 
yields the $L\log L$ moment condition allows us to apply Theorem \ref{th:main-cont} which ends the proof,
 with $\log(\lambda):= \lambda_1 + \bar{B}$.
\end{proof}

\subsection{Application to the \textit{House of Cards} model}\label{sec:HouseOfCards}
We consider a very simple branching model where each particle has a trait $x\in \mathcal{X}= [0,1]$. Between branching events, traits remain constant. We assume that each particle, with trait $x$, branches at a continuous rate $B(x)$. At this branching event, the individual dies and gives birth to $k$ descendants with (continuous) probability $p_k(x)$ with same trait. We also assume that at rate $1$, each particle survives but gives birth to new individuals whose traits are uniformly distributed over $[0,1]$. We assume that $x\mapsto p_k(x)$ and $x\mapsto B(x)$ are continuous (and then bounded) over $[0,1]$. We further assume
$$
\sup_{x\in \mathcal{X}} B(x) \sum_{k\geq 1} k \log(k) p_k(x) <+ \infty,
$$
which gives through Lemma~\ref{lem:nonexplosion} both non-explosion and the $L\log L$ condition. In this case, the mean semigroup $(S_t)_{t\geq 0}$  associated to this dynamics is generated by
$$
\mathcal{A} f(x) = \int_0^1 f(u) du + B(x) \sum_{k\geq 0} (k-1) p_k(x) f(x).
$$
This semigroup was studied in \cite{cloez2024fast}, and references therein. In particular, it is less regular than in other contexts where a law of large numbers is generally proved. For instance, in contrast to diffusion processes,  this semigroup does not lead to an absolutely continuous measure with respect to the Lebesgue measure. However, setting $$\alpha(x)=-B(x) \sum_{k\geq 0} (k-1) p_k(x),$$
this semigroup was studied in \cite{cloez2024fast}, where $a = \alpha-\min(\alpha)$.  We can then apply our main result to obtain the long time behavior of the empirical measure.

\begin{theorem}\label{th:exa-hoc} Assume that $\alpha$ is decreasing and 
 $$\int_0^1 (\alpha(x)-\min(\alpha))^{-1} dx >1, \quad 
\sup_{x\in \mathcal{X}} B(x) \sum_{k\geq 1} k \log(k) p_k(x) <+ \infty.
$$
Then \eqref{eq:ergo} is satisfied for $V=V^{\star}=1$ and some continuous positive function $h$ on $[0,1]$ and some probability measure $\gamma$  on $[0,1]$. \\
Assuming further that $\alpha$ is non negative,  then $\lambda >1$  and for any  $x\in [0,1]$, the martingale limit $W$ satisfies   
\begin{align*}
\E_{\delta_x}(W)=h(x)\quad \text{and } \quad \lim_{t\rightarrow \infty}   \lambda^{- t} Z_t(h)= W \quad \mathbb P_{{\delta_x}} \text{ a.s. and in } L^1.
\end{align*}
Besides, the following convergence holds for any  function $f$  continuous on $[0,1]$,  
\begin{align*}
\lim_{t\rightarrow \infty}    \lambda^{-t} Z_t(fh)=\gamma(fh) \, W \quad   \mathbb P_{\delta_x}\, \text{a.s. and in } L^1.
\end{align*} 
\end{theorem}
\begin{proof} The estimate \eqref{eq:ergo} on the first moment semigroup is a consequence of 
\cite[Theorem 1.1]{cloez2024fast}. 
Similarly (but more directly) to Section~\ref{sec:Bdiffusion}, we need to add the normalization
 by $\exp(-\min(\alpha) t)$ to derive the asymptotic behavior of our first moment semigroup from this paper.
In particular, the principal eigenvalue $\lambda_1$ given by \cite[Theorem 1.1]{cloez2024fast} is positive  and  our eigenvalue $\lambda$ writes
 $$\log(\lambda)=\lambda_1+\min (\alpha)>0,$$ 
 since  $\alpha$ is non negative. Thus  the branching process is supercritical.    Also, the eigenvector given by \cite[Theorem 1.1]{cloez2024fast} is semi-explicit and inherits its regularity from the one of $\alpha$.
Recalling that  Equation~\eqref{eq:lgolfacile} implies \eqref{eq:xlogx-cont}, we can apply 
Theorem~\ref{th:main-cont}  and conclude.
\end{proof}
As shown in \cite{cloez2024fast}, when $\int_0^1 (\alpha(x)-\min(\alpha))^{-1} dx \leq 1$, convergence is no longer exponential and the limiting measure $\gamma$ may be degenerate. In certain cases, polynomial convergence can be obtained. The corresponding estimations  can differ  from  Assumption~\ref{ass:cvSt} and seem to involve  a third Lyapounov function. This leads an interesting case for future works, which may  need a technical and  delicate of the proofs given in this paper.

\subsection{Application to some growth-fragmentation models}
\label{GFMod}

Let us apply our result to a growth fragmentation process, with exponential growth and binary division. 

This example has been extensively studied in the literature, see for example \cite{bertoin2017markovian,villemonais2025quasi,mischler2016spectral} and references therein. From the point of view of the law of large numbers, this example does not fit into  previous general results as \cite{englander2010strong,harris2010strong} due to absence  of densities, reversibility or  compactness, while some interesting classes have been studied in \cite{horton2020strong,bertoin2020strong,tomavsevic2022ergodic}.

The fragments take values in $\mathcal X=(0,\infty)$,  the growth of the cell is exponential with rate $1$ and the division rate $B$ is regular and increasing : 
$ B\in C^1((0,\infty), \R_+)$. 
At branching events, a particle with trait $x$ divides into two new particles, respectively with traits $\theta x$ and $(1-\theta)x$, where $\theta$ is a random variable on $(0,1)$ with some fixed law $\vartheta$.  We assume that the fragmentation kernel satisfies
$$\vartheta(d\theta)  \geq \frac{\mathbf{1}_{[\Theta_0-\epsilon, \Theta_0]}}{c_0}d\theta ,$$
for some $\Theta_0\in (0,1), \epsilon \in [0,\Theta_0], c_0>0$.

This assumption guarantees that the underlying semigroup creates density at division, enabling us to prove convergence in total variation. This type of assumption is necessary for this type of model to avoid some pathological behavior \cite{cyclic}.

Following the notation of Section \ref{se:lgolfacile}, 
the generator of the measure-valued branching process is then given  by
\begin{align}
&A_n F_f(\mu) = A F_f(\mu) = \mu( \textsc{Id}. f') F'(\mu(f))\nonumber \\
&\qquad \qquad + \int_{(0,+\infty)} B(x) \left(\int_{0}^{1} F\left(\mu(f) -f(x) + f(\theta x) + f((1-\theta)x) \right) \vartheta(d\theta) - F(\mu(f)) \right) \mu(dx). \nonumber 
\end{align}
The first moment semigroup
is  well known and described by \cite[Theorem 3.1 (iii)]{gabriel:tel-03144625}. However, the study of the branching process for a general branching rate $B$ has, up to our knowledge, not yet been achieved. We focus here on non-explosion and $L \log L$ condition and compensations in the mechanisms at infinity. A higher branching rate implies that the empirical measure supports the compacts but increases the number of individuals.

Note here that the $L \log L$ condition does not only rely on the number of offsprings in contrast with multi-type branching processes.  Indeed, as we need to use a non constant Lyapunov function then the $L \log L$ condition is a condition on the total mass of the population.


\begin{theorem} Under the above assumptions, the growth fragmentation  process $Z$ is non explosive  and well defined on $\R_+$. Moreover there exists a unique positive eigentriplet $(\lambda, \gamma, h)$ where $h$ is continuous on $\R_+$ and $\lambda>1$  solution of \eqref{eigencont},  and $\gamma$ probability measure on $[0,1]$   and for any $x\in \mathbb R_+$,
  \begin{align*}
\E_{\delta_x}(W)=h(x)\quad \text{and } \quad \lim_{t\rightarrow \infty}   {\lambda^{-t}} Z_t(h)= W \quad  \mathbb P_{\delta_x} \text{ a.s. and in } L^1
\end{align*}
and  for any $f\in \mtcB(h)$, 
\begin{align*}
\lim_{t\rightarrow \infty}    {\lambda^{-t} }Z_t(fh)=\gamma(fh) \, W\quad  P_{\delta_x} \text{a.s. and in } L^1.
\end{align*}
\end{theorem}
\begin{proof}
Let us simultaneously prove this result and provide a few hints of extensions, in particular with respect to general growth. {Between branching events, the trait of each individual
evolves deterministically according to the ODE $\dot{x}(t):= \frac{d}{dt}x(t)=g(x(t)),$  where  $g:(0,\infty)\to(0,\infty)$ is a given $C^1$ growth rate function whose flow is well defined and unique for all $t\ge0$
under the above regularity assumptions.} 

The corresponding generator is
\begin{align}
&A_n F_f(\mu) = A F_f(\mu) = \mu( g f') F'(\mu(f))\nonumber \\
&\qquad \qquad + \int_{(0,+\infty)} B(x) \left(\int_{0}^{1} F\left(\mu(f) -f(x) + f(\theta x) + f((1-\theta)x) \right) \vartheta(d\theta) - F(\mu(f)) \right) \mu(dx). \nonumber 
\end{align}
We work with $O_n=\mathcal{O}_n=(0,n)$ and we consider a function $V$ which tends to infinity at infinity but remains bounded at 0, instead of a function tending to infinity at both boundaries of the domain.

Let us see how Lemma~\ref{lem:nonexplosion}  applies here. Considering  $F:x\mapsto x {\log^\star(x)}$,  and $V$ verifying  for all $x\in (0,+\infty)$ and 
$\theta \in (0,1)$,
$$ -V(x) + V(\theta x) + V((1-\theta)x) \leq 0
$$
leads by monotonicity of $F$ to
\begin{align*}
A F_V(\mu) 
&\leq \mu( g V') (1+ \log(\mu(V))).
\end{align*}
The drift condition of Lemma~\ref{lem:nonexplosion} is then verified as soon as $gV'\leq C V$, for some constant $C>0$.  Taking $V: x\mapsto x^p$ (or $V: x\mapsto 1+x^p$), we then capture growth rates $g$ satisfying 
$g(x) \leq C x,$ for all $x>0$ and some fixed $C>0$, which covers a large range of models of the literature, in particular our example here $g=\textsc{Id}$.   We  use  \cite[Theorem 5.3]{bansaye2022non} to prove exponential convergence of the first moment semigroup. As $G\mathbf{1}>0$, we necessarily have $\lambda>1$. The regularity of $h$ may be proven as in \cite[Lemma 4.1]{cloez2021long}. Then Assumption \eqref{eq:ergo} is satisfied and  we are then in a position to apply Theorem~\ref{th:main-cont} and obtain the results.
\end{proof}

We observe that we can study more complex growth-fragmentation models. Indeed, let us  fix $p>1$ and
$$
\alpha_p=1-\int_0^1 (\theta^p + (1-\theta)^p) \vartheta(d\theta) >0.
$$
Consider now $F :x \mapsto x^2$ and $V :x\mapsto x^p$. We have
\begin{align}
&A F_V(\mu) = 2\mu( g V') \mu(V) \nonumber 
+ \int_{(0,\infty)} B(x) \left( -2 \mu(V) \alpha_p V(x) + \alpha_p^2 V(x)^2 \right) \mu(dx).
\end{align}
The drift condition of Lemma~\ref{lem:nonexplosion} is verified as soon as

$$
\limsup_{x\to \infty} (p g(x)/x - 2 B(x)\alpha_p) <+\infty.
$$
This then enables us to ensure that the non-explosion condition holds and that the $L \log L$ condition also holds for branching processes whose growth rates are of the form $g:x\mapsto x^2$, for which the associated ODE is explosive, as soon as $\liminf_{x \to \infty}B(x)/x = + \infty$.

\begin{appendix}
  \section{Appendix: $L\log L$ moment estimates}

We prove the two inequalities relied upon in Proposition \ref{Propos-propag}.

Under Assumption \ref{ass:cvSt}, assuming that $V:\mathcal{X} \to (0,\infty)$ is a measurable function such that $V\leq V^\star$ and $V\log(V)\in \mathcal B(V^{\star})$, we derive upper-bounds in the next two lemmas for the quantity 
$$I_n= \sup_{x \in \mathcal X} \frac{\E_{\delta_x}(Z_n(V)\log^{\star} Z_n(V))}{V^\star(x)} \quad (n\geq 1).$$
\begin{lemma}
\label{Lemma-propag-i}
If $SV 
        \in \mathcal{B}(V)$, then there exists a constant $K_V\in [1,\infty)$ such that  for any $n\in \N^*$ such that $n=mq+r,$ with $(m,q) \in (\N^*)^2$  and  $r \in \mathbb{N}, 0\leq r \leq m-1$, 
\begin{align*}
\label{I_nleqI_r-I_q}
        I_n \leq c\, K_V^{n} \, \Big( I_r +I_q+1 \Big).
\end{align*}

\end{lemma}

\begin{proof}
     For any $n\in \N^*$ and $f
\in \mathcal{B}(V^\star),$ we introduce the function $x\mapsto i_n (f) (x)$ defined for any $x \in \mathcal{X}$ by
\begin{align*}
    i_n(f)(x):=\E_{\delta_x}(Z_n(f)\log^\star Z_n(f)).
\end{align*}
In the case where $f=V$, we simplify the notation and write $i_n(x):=i_n(V)(x)$.  Observe as well that for any $n\in \N^*,$ 
\begin{align*}
    I_n=\sup_{x\in \mathcal{X}} \left(\frac{i_n(x)}{V^\star(x)}\right).
\end{align*} 
Let us write $n=mq+r$ with $0\leq r \leq m-1$. Using the branching property,  

$$Z_n(V)=\sum_{u\in \mathbb{G}_{n-q}}{Z_q^{(u)}(V)}$$ 
whereby, conditionally on $\mathcal F_{n-q}$, the variables $Z_q^{(u)}(V)$ are independent for $u\in \mathbb{G}_{n-q}$. Thus we can apply {Lemma 1 of \cite{asmussen1976strong} adapted to conditional expectations} to obtain
\begin{align*}
    i_n(x)\leq \E_{\delta_x}\Big( \E(Z_n(V)\vert \mathcal{F}_{n-q})\log^\star \E(Z_n(V)\vert \mathcal{F}_{n-q})+\sum_{u\in \mathbb{G}_{n-q}}\E(Z_q^{(u)}(V)\log^\star Z_q^{(u)}(V)\vert \mathcal{F}_{n-q}) \Big).
\end{align*}
Next, observing that $\E(Z_n(V)\vert \mathcal{F}_{n-q})=Z_{n-q}(S_q V),$ 
\begin{align}
\E_{\delta_x}\Big( \E(Z_n(V)\vert \mathcal{F}_{n-q})\log^\star \E(Z_n(V)\vert \mathcal{F}_{n-q})\Big) 
&= \E_{\delta_x}(Z_{n-q}(S_qV)\log^\star Z_{n-q}(S_qV)) 
\end{align}

Adding that $SV\in \mathcal{B}(V),$  there exists a finite constant $K_1\geq 1$ such that $S_qV \leq K_1^qV$ for any $q\in \N.$ Using that $\log^{\star}(ab)\leq \log^{\star}(a)+\log^{\star} (b)$ for $a,b\geq 0$,
it leads to 
\begin{align*}
  \E_{\delta_x}(Z_{n-q}(S_qV)\log^\star Z_{n-q}(S_qV))
&\leq K_1^q   \E_{\delta_x}(Z_{n-q}(V)(\log^{\star}(Z_{n-q}(V))+q\log(K_1)))\\
&\leq K_1^q(i_{n-q}(x)+q\log(K_1)K_1^{n-q}V(x)).
\end{align*}
Next we re-write the second term 
\begin{align*}
\E_{\delta_x} \Big( \sum_{u\in \mathbb{G}_{n-q}}\E(Z_q^{(u)}(V)\log^\star Z_q^{(u)}(V)\vert \mathcal{F}_{n-q})   \Big)&=\E_{\delta_x}\Big(  \sum_{u\in \mathbb{G}_{n-q}} \E_{Z(u)} (Z_q(V)\log^\star Z_q(V)) \Big)  \\
 & = S_{n-q}i_q(x).
 \end{align*}
Observe that assuming $I_q<\infty$ at this point implies $i_q\in \mathcal{B}(V^\star)$, which in turns means that we can apply the first part of Condition \eqref{ergodnc} to the function $i_q$ and obtain 
\begin{align*}
   \E_{\delta_x} \Big( \sum_{u\in \mathbb{G}_{n-q}}\E(Z_q^{(u)}(V)\log^\star Z_q^{(u)}(V)\vert \mathcal{F}_{n-q})   \Big)
   &\leq \lambda^{n-q}\Big(\gamma(i_q)h(x)+V^\star(x)a_{n-q}\Big)\\
&\leq \lambda^{n-q}V^\star(x)\Big(\gamma(V^\star)I_q+a_{n-q}\Big),
\end{align*}
where we used $h\in \mathcal{B}(V^\star)$ and $\gamma(i_q)\leq I_q \gamma(V^\star)$.
Combining these upper-bounds, we finally get 
\begin{align*}
    i_n(x)\leq K_1^q   i_{n-q}(x)+K_1^{n}q\log(K_1)V(x)+\lambda^{n-q}V^\star(x)(\gamma(V^\star)I_q+a_{n-q}).
\end{align*}
Iterating this inequality we obtain
\begin{align*}
      i_n(x)&\leq K_1^{mq}i_{n-mq}(x)+ V(x)mq\log(K_1)K_1^n +V^\star(x) \sum_{k=1}^m \lambda^{n-kq}K_1^{q(k-1)}(\gamma(V^\star)I_q+
    a_{n-kq})\\
    & \leq K_1^{n}i_{r}(x)+ nK_1^{n}V(x)\log(K_1) +cK_2^{n} \, \Big( I_q+ 1\Big)V^\star(x),
\end{align*}
    where  $K_2\geq K_1$ and $c$ are finite  constants, recalling that $\sum a_n/n<\infty$.
     Dividing by $V^\star$, taking the sup over $x\in \mathcal{X}$ and recalling that $V^{\star}\geq V$ yields the result.

\end{proof}

\begin{lemma}
 \label{Lemma-propag-ii} Assume  there exists $C\geq 0$ and that $V$ satisfies  $S_nV\leq C \lambda^n V$ for any $n\geq 0$ and some constant $C\geq 0$. Then there exists a finite constant $K\geq 0$ such that for any $n=mq+r$  with $n\in \mathbb N^*$, $m \in \N^*, q \in \N$ and  $0\leq r \leq m-1$, 
$$I_n \leq n \lambda^n K^m (I_r+I_q+1).$$
\end{lemma}

\begin{proof}
Similarly as in the previous proof, 
\begin{align*}
    \E_{\delta_x}(Z_{n-q}(S_qV)\log^\star Z_{n-q}(S_qV))
&\leq C \lambda^{q} \left(i_{n-q}(x)+q\lambda^{n-q}V(x))\right),
\end{align*}
where $C\geq 0$ is a constant.  
We next continue with the second term. 
\begin{align*}
    \E_{\delta_x}\left( \sum_{u\in \mathbb{G}_{n-q}} \E(Z_q^{(u)}(V)\log^\star Z_q^{(u)}(V)\vert \mathcal{F}_{n-q}) \right)&=\E_{\delta_x}\left( \sum_{u\in \mathbb{G}_{n-q}} \E_{Z^{(u)}}(Z_q(V)\log^\star Z_q(V)) \right)\\
    & =(S_{n-q} i_q)(x)\\
    &\leq \lambda^{n-q}V^\star(x)(I_q\gamma(V^\star)+a_{n-q}).
\end{align*}
Combining these two upper-bounds, we obtain
\begin{align*}
    i_n(x)&\leq C \lambda^q i_{n-q}(x)+Cq\lambda ^n V(x)+\lambda^{n-q}V^\star(x)(I_q\gamma(V^\star)+a_{n-q}),
\end{align*}
which becomes by iteration, relying on   {$\sum_n a_n/n <\infty$} \ $\lambda>1$, 
\begin{align*}
    i_n(x)&\leq \lambda^{mq}C^m  i_{n-mq}(x)+C'qm \lambda^n V(x)+V^\star(x) \sum_{k=1}^m C^{k-1}\lambda^{n-kq}\lambda^{q(k-1)}(I_q+
    a_{n-kq})\\
    &\leq C''\lambda^n \left(C^m  i_{r}(x)+q V(x)+ nC^mC'(I_q+
    1)V^\star(x)\right),
\end{align*}
where $C',C''$ are constants. Dividing by $V^\star$ and taking the sup on each side ends the proof.
\end{proof}

\end{appendix}

\noindent {\bf Acknowledgement.} The authors are very grateful to the anonymous referees for their very thorough  reviews and their numerous relevant suggestions on the first version of this work. This work  was partially funded by the Chair “Mod\'elisation Math\'ematique et Biodiversit\'e" of VEOLIA-Ecole Polytechnique-MNHN-F.X., by the European Union (ERC, SINGER, 101054787), by the Fondation Mathématique Jacques Hadamard and by the ANR project NOLO (ANR-
20-CE40- 0015), funded by the French Ministry of Research. Views and opinions expressed are however those of the author(s) only and do not necessarily reflect those of the European Union or the European Research Council. Neither the European Union nor the granting authority can be held responsible for them. 
\vspace{1mm}
\bibliographystyle{apalike}
\bibliography{sample.bib}

\end{document}